\documentclass[11pt]{amsart}
\usepackage{amsmath,amssymb,amsthm,mathtools}
\usepackage[margin=1in]{geometry}
\usepackage{graphicx}
\usepackage{booktabs}
\usepackage[dvipsnames]{xcolor}
\usepackage[colorlinks=true,linkcolor=blue!60!black,citecolor=blue!60!black,urlcolor=blue!60!black]{hyperref}

\numberwithin{equation}{section}
\newtheorem{theorem}{Theorem}[section]
\newtheorem{proposition}[theorem]{Proposition}
\newtheorem{lemma}[theorem]{Lemma}
\newtheorem{corollary}[theorem]{Corollary}
\theoremstyle{definition}
\newtheorem{definition}[theorem]{Definition}
\newtheorem{assumption}[theorem]{Assumption}
\theoremstyle{remark}
\newtheorem{remark}[theorem]{Remark}

\makeatletter
\def\subsection{\@startsection{subsection}{2}%
  \z@{.7\linespacing\@plus\linespacing}{.5\linespacing}%
  {\normalfont\bfseries}}
\def\subsubsection{\@startsection{subsubsection}{3}%
  \z@{.5\linespacing\@plus.7\linespacing}{.3\linespacing}%
  {\normalfont\itshape}}
\makeatother

\newcommand{\pd}{\partial}
\newcommand{\E}{\mathbb{E}}
\newcommand{\R}{\mathbb{R}}
\newcommand{\C}{\mathbb{C}}
\newcommand{\G}{\mathcal{G}}
\newcommand{\K}{\mathcal{K}}
\newcommand{\F}{\mathcal{F}}

\newcommand{\tr}{\operatorname{tr}}

\title[Lie point symmetries of a dependent jump-diffusion risk model]{Lie point symmetry analysis of a dependent jump-diffusion risk model with exponential claim sizes}

\author{Aminu Ma'aruf Nass}
\address{Department of Actuarial Science, Federal University Dutse (FUD), P.M.B.~7156, Dutse, Jigawa State, Nigeria}
\email{aminu.nass@fud.edu.ng}

\author{Kassimu Mpungu}
\address{Department of Physical Sciences, Mountains of the Moon University (MMU), P.O.~Box 837, Fort Portal, Uganda}
\email{kmpungu@mmu.ac.ug}

\author{Rahmatullah Ibrahim Nuruddeen}
\address{Department of Mathematics, Federal University Dutse (FUD), P.O.~Box 7156, Dutse 720001, Jigawa State, Nigeria; Faculty of Education and Arts, Sohar University, Oman; and Gulf University for Science and Technology, Department of Mathematics and Natural Sciences, Kuwait}
\email{rahmatullah.n@fud.edu.ng}

\author{Yaqoob Al Rahbi}
\address{Faculty of Education and Arts, Sohar University, Oman}
\email{yrahbi@su.edu.om}

\subjclass[2020]{34C14, 35A30, 45J05, 91G05, 60J76}
\keywords{Lie point symmetry; group classification; Gerber--Shiu function; dependent risk model; jump-diffusion; integro-differential equation; generalized Lundberg equation; adjustment coefficient}

\begin{document}

\begin{abstract}
We carry out a Lie point symmetry analysis of the coupled integro-differential system satisfied by the conditional Gerber--Shiu functions of a dependent jump-diffusion risk model. In the model, claim sizes are exponential, the intensity of the next claim arrival depends on whether the current claim exceeds a random threshold, and both the volatility and the premium rate may depend on the surplus. We write the system as the restriction to the half-line of a far-field operator with prescribed exterior data, derive the complete set of determining equations, including the condition imposed by the nonlocal convolution term, and solve them. The resulting group classification holds for every surplus-dependent volatility and premium of the form considered and for every absolutely continuous or constant threshold distribution. A non-trivial symmetry beyond scaling and superposition exists if and only if the volatility and the premium rate are constant, and it is then exactly the translation in the surplus variable. We construct the optimal system of one-dimensional subalgebras. We show that the invariant solutions of the combined translation--scaling generators exist precisely when a $2\times2$ generalized Lundberg determinant vanishes, and we give this determinant for an arbitrary threshold distribution, with explicit forms for the constant and exponential thresholds. We prove existence and sign properties of the adjustment coefficient, derive its sensitivities to all model parameters, and establish Cram\'er--Lundberg-type asymptotics for the Gerber--Shiu functions, including the limit of the ratio between the two regimes. A numerical study shows that claim-size-driven dependence lowers the adjustment coefficient markedly below that of an independent model with the same mean claim intensity.
\end{abstract}

\maketitle
\markboth{A.~M.~NASS ET AL.}{LIE POINT SYMMETRIES OF A DEPENDENT JUMP-DIFFUSION RISK MODEL}

\section{Introduction}\label{sec:intro}

Dependent risk models, which allow correlation between claim sizes and inter-claim times, describe insurance portfolios more realistically than the classical independence assumptions \cite{Lundberg1903,Cramer1930,AsmussenAlbrecher2010}. Early extensions of the Cram\'er--Lundberg model added a diffusion perturbation to account for small fluctuations in income and expenses \cite{DufresneGerber1991,GerberLandry1998,TsaiWillmot2002}. Albrecher and Boxma \cite{AlbrecherBoxma2004} introduced a model in which the distribution of the next inter-claim time depends on whether the current claim exceeds a random threshold, and obtained exact Laplace transforms for the ruin probability. Related dependence structures were studied in \cite{AlbrecherTeugels2006,BoudreaultEtAl2006}; Badescu, Cheung and Landriault \cite{BadescuEtAl2009} used bivariate phase-type distributions; Cheung, Landriault, Willmot and Woo \cite{CheungEtAl2010} analysed structural properties of Gerber--Shiu functions \cite{GerberShiu1998} in dependent Sparre Andersen models; and diffusion-perturbed versions of the threshold model were treated in \cite{ZhangYang2011,Gao2016}. These works rely mainly on Laplace transforms, renewal arguments and martingale techniques. The invariance properties of the governing integro-differential systems have received little attention.

Lie point symmetry methods provide a systematic way to find the transformations that leave a differential equation invariant, the solutions invariant under them, and the reductions they induce \cite{Ovsiannikov1982,BlumanKumei1989,Olver1993,Ibragimov1994}. They have been extended to integro-differential equations \cite{GrigorievEtAl2010}. They have also been applied widely to stochastic differential equations \cite{GaetaQuintero1999,Unal2003,GaetaSpadaro2017,Nass2022,NassFredericks2017,Nass2019} and to nonlinear and fractional models \cite{AzadEtAl2010,KumarEtAl2022,YuFeng2025a,YuFeng2025b}. To the best of our knowledge, they have not been applied to the coupled Gerber--Shiu systems of dependent jump-diffusion risk models.

This paper carries out such an analysis. The governing system is linear but nonlocal: it couples two second-order differential operators with a convolution against the claim-size density, weighted by the threshold distribution. We treat the nonlocal term exactly, in the spirit of \cite{GrigorievEtAl2010}, and obtain the following results.
\begin{enumerate}
\item \emph{Far-field formulation.} The Gerber--Shiu system is the restriction to the positive half-line of a far-field operator acting on functions on $\R$, with the penalty function as exterior data (Proposition~\ref{prop:exterior}). The symmetry analysis is carried out for this operator.
\item \emph{Determining equations.} We derive the complete determining equations for fiber-preserving linear generators (Lemma~\ref{lem:determining}). They consist of four local conditions, from the coefficients of $m'''$, $m''$, $m'$ and $m$, and one kernel condition from the convolution term.
\item \emph{Group classification.} For surplus-dependent volatility $\sqrt{\sigma_1^2+\sigma_2^2u^2}$ and premium $c+au$, and for every absolutely continuous or constant threshold distribution, the symmetry algebra is spanned by the translation $\pd_u$, the scaling $Y_0$ and the superposition symmetries when $\sigma_2=a=0$, and by $Y_0$ and the superposition symmetries otherwise (Theorem~\ref{thm:classification}). The determining equations coming from the $m''$- and $m'$-coefficients alone admit further solutions; these are excluded by the $m$-coefficient and by the kernel condition (Remark~\ref{rem:partial}).
\item \emph{Invariant solutions and the generalized Lundberg equation.} The algebra $\langle\pd_u,Y_0\rangle$ is abelian; its optimal system is $\{\langle Y_0\rangle,\langle\pd_u-\gamma Y_0\rangle\}$. The solutions invariant under $\pd_u-\gamma Y_0$ exist precisely when $\det M(\gamma)=0$, where $M$ is a generalized Lundberg matrix given for an arbitrary threshold distribution (Theorem~\ref{thm:lundberg}). Its determinant has a reduced form that depends on volatility and premium only through the Laplace exponent $A\gamma^2-B\gamma$.
\item \emph{Adjustment coefficient and asymptotics.} We prove that an adjustment coefficient exists, determine the signs of the relevant quantities, derive closed-form sensitivities to every parameter, and establish Cram\'er--Lundberg-type asymptotics for the Gerber--Shiu functions with an explicit limiting ratio between the two regimes (Theorem~\ref{thm:asymptotics}).
\item \emph{The effect of dependence.} Numerically, claim-size-driven switching lowers the adjustment coefficient by up to about $8\%$ relative to an independent model with the same mean claim intensity, whereas random switching that ignores the claim size lowers it by about $2\%$ (Section~\ref{sec:numerics}).
\end{enumerate}

The paper is organized as follows. Section~\ref{sec:model} formulates the model and its Gerber--Shiu system. Section~\ref{sec:determining} derives the determining equations, and Section~\ref{sec:classification} solves them. Section~\ref{sec:invariant} constructs the optimal system, the invariant solutions and the generalized Lundberg equation. Section~\ref{sec:adjustment} studies the adjustment coefficient and the asymptotics of the Gerber--Shiu functions. Section~\ref{sec:numerics} gives a numerical illustration, and Section~\ref{sec:conclusion} discusses the results and concludes.

\section{Model formulation}\label{sec:model}

This section introduces the risk process and the threshold dependence, states the integro-differential system for the conditional Gerber--Shiu functions, and rewrites it in the far-field form used throughout the paper.

\subsection{Dependent jump-diffusion risk process}

We begin with the dynamics of the surplus. Let $W$ be a standard Brownian motion, $X_1,X_2,\dots$ i.i.d.\ claim sizes, exponentially distributed with parameter $\beta>0$, and $Y_1,Y_2,\dots$ i.i.d.\ thresholds with distribution function $G$ on $[0,\infty)$, all independent. The surplus process $U$ satisfies
\begin{equation}\label{eq:sde}
dU_t=B(U_t)\,dt+\sqrt{2A(U_t)}\,dW_t-dS_t,\qquad U_0=u\ge0,
\end{equation}
where $S_t=\sum_{k\le N_t}X_k$ and
\begin{equation}\label{eq:AB}
A(u)=\tfrac12\big(\sigma_1^2+\sigma_2^2u^2\big),\qquad B(u)=c+au,
\end{equation}
with $\sigma_1>0$, $c>0$ and $\sigma_2^2,a\ge0$. The term $\sigma_2^2u^2$ models volatility that grows with the surplus, as for a surplus invested in a risky asset, and $a$ models premium or interest income proportional to the surplus. The claim arrivals are dependent in the sense of Albrecher and Boxma \cite{AlbrecherBoxma2004}: if $X_k>Y_k$, the next inter-claim time is exponentially distributed with rate $\lambda_1$ (high-risk regime), and otherwise with rate $\lambda_2$ (low-risk regime), where $\lambda_1\neq\lambda_2$. We write $J_t\in\{1,2\}$ for the regime of the inter-claim interval in progress at time $t$, so that $(U,J)$ is a Markov process. After a claim of size $y$ the new regime is $1$ with probability $G(y)$.

\subsection{Conditional Gerber--Shiu functions}

Next we define the quantities of interest and state their equations. Let $\tau=\inf\{t\ge0:U_t<0\}$, with ruin also occurring when the diffusion carries the surplus to $0$. Let $\delta>0$ be the force of interest and $w:[0,\infty)\to\R$ a bounded penalty applied to the deficit $|U_\tau|$. Define
\[
m_j(u)=\E\big[e^{-\delta\tau}w(|U_\tau|)\mathbf 1_{\{\tau<\infty\}}\,\big|\,U_0=u,\ J_0=j\big],\qquad j=1,2,
\]
and $\hat w=\int_0^\infty\beta e^{-\beta z}w(z)\,dz$. If the initial regime is drawn as after a claim of size $X$, the unconditional Gerber--Shiu function is $\pi m_1+(1-\pi)m_2$ with $\pi=\mathbb{P}(Y<X)$.

\begin{proposition}\label{prop:gs}
Assume $m_1,m_2\in C^2(0,\infty)\cap C[0,\infty)$. Then, for $u>0$ and $j=1,2$,
\begin{multline}\label{eq:gs}
A(u)m_j''(u)+B(u)m_j'(u)-(\lambda_j+\delta)m_j(u)\\
+\lambda_j\int_0^u\beta e^{-\beta y}\big[G(y)m_1(u-y)+(1-G(y))m_2(u-y)\big]dy+\lambda_j\hat w\,e^{-\beta u}=0,
\end{multline}
with $m_j(0)=w(0)$ and $m_j(u)\to0$ as $u\to\infty$.
\end{proposition}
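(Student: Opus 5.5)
The plan is the standard infinitesimal (first-step) argument for Gerber--Shiu functions of a Markov-modulated jump-diffusion, applied to the pair $(U,J)$. Fix $j$ and $u>0$, and let $h>0$ be small. On $[0,h]$, conditioned on $J_0=j$, a claim arrives at rate $\lambda_j$, and before the first claim the surplus evolves as the diffusion $dU=B(U)\,dt+\sqrt{2A(U)}\,dW$. I will condition on $T_1\wedge h$, where $T_1$ is the first claim time, and use the strong Markov property of $(U,J)$ at this stopping time, together with the memorylessness of the exponential inter-claim time in regime $j$. This gives
\[
m_j(u)=\E_{u,j}\Big[e^{-\delta h}\mathbf 1_{\{T_1>h,\,h<\tau\}}\,m_j(U_h)\Big]+\E_{u,j}\Big[\mathbf 1_{\{T_1\le h,\ T_1<\tau\}}\,e^{-\delta T_1}\,\Phi\big(U_{T_1-}\big)\Big]+o(h).
\]
Here $\Phi(x)$ is the expected discounted penalty immediately after a claim at surplus level $x$:
\[
\Phi(x)=\int_0^x\beta e^{-\beta y}\big[G(y)m_1(x-y)+(1-G(y))m_2(x-y)\big]dy+\int_x^\infty\beta e^{-\beta y}w(y-x)\,dy .
\]
The first integral covers claims that leave the surplus nonnegative. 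In that case the new regime is $1$ with probability $G(y)$, because $\mathbb P(Y<y)=G(y)$ and the threshold is independent of the claim size. The second integral covers claims that cause ruin with deficit $y-x$. By memorylessness of the exponential claim size, this second term equals $\hat w\,e^{-\beta x}$. Diffusion-driven ruin before time $h$ from $u>0$ has probability $o(h)$, and so does the event of two claims in $[0,h]$.

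Next I expand the first expectation. The event $T_1>h$ has probability $e^{-\lambda_j h}$ and is independent of $W$. By Itô's formula for $m_j(U_h)$ on the claim-free path,
\[
\E[m_j(U_h)]=m_j(u)+h\big(A(u)m_j''(u)+B(u)m_j'(u)\big)+o(h).
\]
To justify this with only $C^2(0,\infty)$ regularity, I will localise. Take a compact interval $[u-\varepsilon,u+\varepsilon]\subset(0,\infty)$ and stop at its exit time. The exit time exceeds $h$ except on an event of probability $o(h)$, since the diffusion coefficients are bounded there. The $m_j$ are bounded by $\sup|w|$, which controls the remainder. The second expectation equals $\lambda_j h\,\Phi(u)+o(h)$, using continuity of $\Phi$, which follows from $m_i\in C[0,\infty)$ and dominated convergence.

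Then I collect terms, subtract $m_j(u)$, divide by $h$, and let $h\to0$. This gives $0=Am_j''+Bm_j'-(\lambda_j+\delta)m_j+\lambda_j\Phi(u)$, which is exactly \eqref{eq:gs}. The boundary condition $m_j(0)=w(0)$ holds because, starting at $0$, the nondegenerate diffusion ($\sigma_1>0$) enters $(-\infty,0)$ immediately. Thus $\tau=0$ almost surely and the deficit is $0$; this is the stated convention of ruin by diffusion. The condition $m_j(u)\to0$ as $u\to\infty$ needs a positive loading. I will take it as part of the standing net-profit condition, so that $\tau\to\infty$ in probability as $u\to\infty$. Bounded convergence with $\delta>0$ then gives $\E[e^{-\delta\tau}]\to0$.

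I expect the main obstacle to be rigour in the small-$h$ expansion under the weak regularity hypothesis. The key points are controlling the $o(h)$ errors uniformly and handling diffusion ruin from a neighbourhood of $u$. The localisation above is the first approach I would try. The alternative is a martingale verification: show that $e^{-\delta(t\wedge\tau)}m_{J_t}(U_{t\wedge\tau})$, plus the accumulated penalty, is a martingale, and identify its compensator via Dynkin's formula for the generator of $(U,J)$. In that route the generator's jump part is precisely $\lambda_j(\Phi-m_j)$.
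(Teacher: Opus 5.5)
Your derivation of the integro-differential equation and of $m_j(0)=w(0)$ is fine. The first-step expansion over $[0,h]$ is the infinitesimal form of the paper's argument, which applies Dynkin's formula to $e^{-\delta t}m_{J_t}(U_t)$ up to $\tau$, using the generator of $(U,J)$ and the exterior extension $m_j(x)=w(-x)$. Your localisation treats the $C^2(0,\infty)$ regularity more carefully than the paper does.

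\textbf{The gap is the condition at infinity.} You claim that $m_j(u)\to0$ needs a positive loading and import a ``standing net-profit condition''. The proposition has no such hypothesis, and none is needed, because $\delta>0$. The model as stated allows parameters for which ruin is certain from every $u$, for example $\sigma_2=a=0$ with $c$ below the long-run mean claim outflow per unit time. The proposition still asserts $m_j(u)\to0$ there, so your argument proves a weaker statement.

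What you actually need is only the finite-horizon fact $\mathbb{P}_u(\tau\le t)\to0$ as $u\to\infty$ for each fixed $t$. This has nothing to do with the sign of the loading. The paper obtains it as follows.
\begin{itemize}
\item First, $|m_j(u)|\le\|w\|_\infty\big(\mathbb{P}_u(\tau\le t)+e^{-\delta t}\big)$.
\item Take $f(x)=(1+x)^{-1}$ on $[0,\infty)$ and $f=1$ on $(-\infty,0)$. Then $Bf'+Af''\le K_1f$, since $B\ge0$ and $A$ grows at most quadratically.
\item Also $\E[f(x-X)]\le f(x/2)+e^{-\beta x/2}\le K_2f(x)$.
\item Hence $\E_u[f(U_{t\wedge\tau})]\le e^{Kt}f(u)$, and since $f(U_\tau)=1$, this gives $\mathbb{P}_u(\tau\le t)\le e^{Kt}/(1+u)$.
\end{itemize}
Letting $u\to\infty$ and then $t\to\infty$ gives $m_j(u)\to0$. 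Replacing your net-profit step by this argument, or by any other finite-horizon ruin estimate, closes the gap.
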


\begin{proof}
The generator of $(U,J)$ acts on $f=(f_1,f_2)$ by
\[
(\mathcal Af)_j(u)=Af_j''+Bf_j'+\lambda_j\Big(\int_0^\infty\beta e^{-\beta y}\big[G(y)f_1(u-y)+(1-G(y))f_2(u-y)\big]dy-f_j(u)\Big).
\]
The first two terms come from It\^o's formula for \eqref{eq:sde} between claims. The last term comes from claims arriving at rate $\lambda_j$ and switching the regime to $1$ with probability $G(y)$. Setting $m_j(x)=w(-x)$ for $x<0$, Dynkin's formula for $e^{-\delta t}m_{J_t}(U_t)$ up to $\tau$ and the strong Markov property give $(\mathcal Am)_j=\delta m_j$ on $(0,\infty)$; see \cite{GerberLandry1998,TsaiWillmot2002,ZhangYang2011} for the constant-coefficient case. For $y>u$ the integrand equals $\beta e^{-\beta y}w(y-u)$ in either regime, and integrating it gives $\hat we^{-\beta u}$. This yields \eqref{eq:gs}. Since $A(0)>0$, ruin by oscillation is immediate at $u=0$, so $m_j(0)=w(0)$.

For the limit, fix $t>0$. Then $|m_j(u)|\le\|w\|_\infty\big(\mathbb{P}_u(\tau\le t)+e^{-\delta t}\big)$. Let $f(x)=(1+x)^{-1}$ for $x\ge0$ and $f(x)=1$ for $x<0$. Then $Bf'+Af''\le K_1f$ on $[0,\infty)$, because $A$ and $B$ grow at most quadratically and linearly. Moreover $\E[f(x-X)]\le f(x/2)+e^{-\beta x/2}\le K_2f(x)$. Hence $\E_u[f(U_{t\wedge\tau})]\le e^{Kt}f(u)$, and since $f(U_\tau)=1$, $\mathbb{P}_u(\tau\le t)\le e^{Kt}/(1+u)$. Letting $u\to\infty$ and then $t\to\infty$ gives $m_j(u)\to0$.
\end{proof}

\begin{remark}
Only the two conditions $m_j(0)=w(0)$ and $m_j(\infty)=0$ are imposed. The value $m_j'(0)$ is part of the solution and is not prescribed.
\end{remark}

\subsection{Far-field formulation}

Finally, we rewrite the system in a form suited to symmetry analysis. For $m=(m_1,m_2)$ defined on $\R$, set
\begin{align}
(\K m)(u)&=\int_0^\infty\beta e^{-\beta y}\big[G(y)m_1(u-y)+(1-G(y))m_2(u-y)\big]dy,\label{eq:K}\\
(\G m)_j(u)&=A(u)m_j''(u)+B(u)m_j'(u)-(\lambda_j+\delta)m_j(u)+\lambda_j(\K m)(u),\qquad j=1,2.\label{eq:G}
\end{align}
These operators act on the class
\[
\F=\Big\{m\in C^\infty(\R;\C^2):\ \forall k\ \exists\,C_k>0,\ \theta_k<\beta\ \text{with}\ |m^{(k)}(x)|\le C_ke^{-\theta_kx}\ \text{for }x\le0\Big\}.
\]
For $m\in\F$ the integral in \eqref{eq:K} converges absolutely together with its derivatives, and $(\K m)'=\K(m')$.

\begin{proposition}\label{prop:exterior}
Extend $m_j$ to $x<0$ by $m_j(x)=w(-x)$. Then \eqref{eq:gs} holds if and only if $(\G m)(u)=0$ for all $u>0$.
\end{proposition}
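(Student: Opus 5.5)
The proof reduces to a direct computation: for $u>0$, split the convolution integral in \eqref{eq:K} at $y=u$. The local terms of $(\G m)_j(u)$ involve only $m_j$ and its derivatives at the point $u>0$. There the extension does not change $m_j$, so these terms agree with the first line of \eqref{eq:gs}. The whole matter therefore reduces to showing that, for every $u>0$,
\[
(\K m)(u)=\int_0^u\beta e^{-\beta y}\big[G(y)m_1(u-y)+(1-G(y))m_2(u-y)\big]dy+\hat w\,e^{-\beta u}.
\]
Once this identity holds, multiplying by $\lambda_j$ shows that $(\G m)_j(u)$ coincides with the left-hand side of \eqref{eq:gs}. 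Hence one vanishes if and only if the other does, for each $j$ and each $u>0$.

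To prove the identity, I will write $\int_0^\infty=\int_0^u+\int_u^\infty$. On $[0,u)$ the argument $u-y$ is positive, so the integrand is exactly the one in \eqref{eq:gs}. On $(u,\infty)$ we have $u-y<0$, so $m_1(u-y)=m_2(u-y)=w(y-u)$. The bracket then collapses to $G(y)w(y-u)+(1-G(y))w(y-u)=w(y-u)$, and the dependence on $G$ and on the regime disappears. The substitution $z=y-u$ gives
\[
\int_u^\infty\beta e^{-\beta y}w(y-u)\,dy=e^{-\beta u}\int_0^\infty\beta e^{-\beta z}w(z)\,dz=\hat w\,e^{-\beta u}.
\]
This is the computation already sketched in the proof of Proposition~\ref{prop:gs}. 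The single point $y=u$ has measure zero because the integrand is integrable, so the ambiguity in the value there is irrelevant.

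The only step requiring care is justifying that the split integrals converge, so that $\K m$ is meaningful for the extended function. Boundedness of $w$ suffices: the integrand on $(u,\infty)$ is dominated by $\|w\|_\infty\beta e^{-\beta y}$. The integrand on $[0,u]$ is continuous, since $m_j\in C[0,\infty)$. I will remark that the extended $m$ need not lie in $\F$, because $w$ need not be smooth and $m$ need not be smooth at $0$. Absolute convergence is all that is used here, and the $C^2$ assumption of Proposition~\ref{prop:gs} on $(0,\infty)$ makes $\G m$ pointwise defined for $u>0$. With these observations the equivalence follows, and no boundary conditions enter: the conditions $m_j(0)=w(0)$ and $m_j(\infty)=0$ are imposed separately.
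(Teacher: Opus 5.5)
Your proof is correct and is the same argument as the paper's: you split $\K m$ at $y=u$, observe that on $(u,\infty)$ the bracket collapses to $w(y-u)$ independently of $G$ and the regime, and substitute $z=y-u$ to obtain $\hat w\,e^{-\beta u}$, while the local terms are unaffected by the extension. Your added convergence remarks are a reasonable supplement to the paper's terse proof, with one small correction: the integrand on $[0,u]$ need not be continuous because $G$ may have jumps, but it is bounded and measurable, which is all you actually use.
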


\begin{proof}
Split \eqref{eq:K} at $y=u$. The part over $(0,u)$ is the convolution in \eqref{eq:gs}. On $(u,\infty)$ both $m_1(u-y)$ and $m_2(u-y)$ equal $w(y-u)$, so that part equals $\hat we^{-\beta u}$.
\end{proof}

The far-field operator $\G$ captures the bulk dynamics of the surplus. The exterior data break translation invariance trivially, as any boundary does, and contribute only terms of order $e^{-\beta u}$. Accordingly, we study the symmetries of $\G$.

\section{Determining equations}\label{sec:determining}

In this section we define the symmetries considered and derive the complete system of determining equations, including the condition coming from the nonlocal convolution term.

\subsection{Generators and the invariance condition}

We first fix the class of generators. Since \eqref{eq:G} is linear, we consider fiber-preserving infinitesimal generators that are linear in the dependent variables,
\begin{equation}\label{eq:X}
X=\xi(u)\,\pd_u+\sum_{i=1}^2\big(\alpha_i(u)m_i+h_i(u)\big)\pd_{m_i},
\end{equation}
with $\xi,\alpha_1,\alpha_2\in C^\infty(\R)$ of at most polynomial growth and $h=(h_1,h_2)\in\F$. The characteristic of $X$ is the vector $Q_X[m]=(\alpha_im_i-\xi m_i'+h_i)_{i=1,2}$. By the classical theory of linear equations \cite{BlumanKumei1989,Olver1993}, extended to integro-differential equations in \cite{GrigorievEtAl2010}, the infinitesimal invariance of a linear equation $\G m=0$ under $X$ means that the linearized operator applied to the characteristic is a combination of the equation and its derivative. This leads to the following definition. It is the notion of a first-order symmetry operator in the sense of Miller \cite{Miller1977}, and it is the condition used by the method of \cite{GrigorievEtAl2010} for linear equations.

\begin{definition}\label{def:sym}
The generator \eqref{eq:X} is \emph{admitted} by $\G m=0$ if $\G h=0$ and there exist smooth functions $\psi_1,\psi_2$ such that, for every $m\in\F$ and $j=1,2$,
\begin{equation}\label{eq:inv}
\big(\G[\alpha m-\xi m']\big)_j=-\xi\,(\G m)_j'+\psi_j\,(\G m)_j,
\end{equation}
where $(\alpha m)_i=\alpha_im_i$.
\end{definition}

If $X$ is admitted and $m$ solves $\G m=0$, then $Q_X[m]$ is again a solution. Hence the one-parameter group generated by $X$ maps solutions to solutions. The generators $h\cdot\pd_m$ with $\G h=0$ are the superposition symmetries, and the scaling $Y_0=m_1\pd_{m_1}+m_2\pd_{m_2}$ ($\xi=0$, $\alpha_1=\alpha_2=1$, $\psi_j=1$) is always admitted.

\subsection{The local part}

Next we compute the action on the differential part of $\G$. Write $L_j[f]=Af''+Bf'-c_jf$ with $c_j=\lambda_j+\delta$, and let $f=m_j$. Direct differentiation gives
\begin{multline*}
L_j[\alpha_jf-\xi f']=-A\xi f'''+\big(A\alpha_j-2A\xi'-B\xi\big)f''+\big(2A\alpha_j'-A\xi''+B\alpha_j-B\xi'+c_j\xi\big)f'\\
+\big(A\alpha_j''+B\alpha_j'-c_j\alpha_j\big)f,
\end{multline*}
and
\[
-\xi\,(L_jf)'+\psi_jL_jf=-A\xi f'''+\big(\psi_jA-\xi A'-\xi B\big)f''+\big(\psi_jB-\xi B'+c_j\xi\big)f'-\psi_jc_jf .
\]
The quantities $f'''(u),f''(u),f'(u),f(u)$ can be prescribed arbitrarily at any point, independently of the nonlocal term. Hence the local parts of \eqref{eq:inv} agree if and only if the coefficients agree.

\subsection{The nonlocal part}

We now treat the convolution term, which is the new ingredient. Write $\K m=\int_0^\infty\big[k_1(y)m_1(u-y)+k_2(y)m_2(u-y)\big]dy$ with $k_1=\beta e^{-\beta y}G$ and $k_2=\beta e^{-\beta y}(1-G)$. The convolution parts of \eqref{eq:inv} agree if and only if, for all $m\in\F$,
\[
\K[\alpha m]-\K[\xi m']+\xi(\K m)'-\psi_j\K m=0 .
\]
Since $(\K m)'=\K(m')$, the $i$-th part of the difference $\K[\xi m']-\xi(\K m)'$ is
\[
\int_0^\infty k_i(y)\big(\xi(u-y)-\xi(u)\big)m_i'(u-y)\,dy=\int_0^\infty\frac{\pd}{\pd y}\Big[k_i(y)\big(\xi(u-y)-\xi(u)\big)\Big]m_i(u-y)\,dy,
\]
by integration by parts, using $m_i'(u-y)=-\pd_ym_i(u-y)$ and the vanishing of the boundary terms at $y=0$ (where $\xi(u-y)-\xi(u)=0$) and at $y=\infty$ (by the bounds defining $\F$). The convolution condition therefore reads $\sum_i\int_0^\infty\Theta_{ij}(u,y)m_i(u-y)\,dy=0$, with
\[
\Theta_{ij}(u,y)=k_i(y)\big(\alpha_i(u-y)-\psi_j(u)\big)-\frac{\pd}{\pd y}\Big[k_i(y)\big(\xi(u-y)-\xi(u)\big)\Big].
\]
Since $m_1$ and $m_2$ restricted to $(-\infty,u)$ are arbitrary, this holds for all $m$ if and only if $\Theta_{ij}(u,\cdot)=0$ almost everywhere.

\subsection{The determining equations}

Collecting the local and nonlocal conditions gives the complete system.

\begin{lemma}\label{lem:determining}
The generator \eqref{eq:X} is admitted if and only if $\G h=0$ and, for $j=1,2$ and all $u\in\R$,
\begin{align}
&A\,(\alpha_j-\psi_j)=2A\xi'-\xi A', \tag{D1}\label{eq:D1}\\
&A\xi''-2A\alpha_j'+B\,(\xi'-\alpha_j+\psi_j)-\xi B'=0, \tag{D2}\label{eq:D2}\\
&A\alpha_j''+B\alpha_j'-c_j\,(\alpha_j-\psi_j)=0, \tag{D3}\label{eq:D3}\\
&\frac{\pd}{\pd y}\Big[k_i(y)\big(\xi(u-y)-\xi(u)\big)\Big]=k_i(y)\big(\alpha_i(u-y)-\psi_j(u)\big)\quad\text{for a.e. } y>0,\ i=1,2. \tag{D4}\label{eq:D4}
\end{align}
\end{lemma}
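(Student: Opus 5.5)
The proof assembles the computations of the two preceding subsections. Definition~\ref{def:sym} asks for $\G h=0$ together with \eqref{eq:inv}. By \eqref{eq:G}, each side of \eqref{eq:inv} splits into a local part and a convolution part. On the left we have $L_j[\alpha_jm_j-\xi m_j']+\lambda_j\K[\alpha m-\xi m']$. On the right we have $-\xi(L_jm_j)'+\psi_jL_jm_j+\lambda_j\big(-\xi(\K m)'+\psi_j\K m\big)$. The $\K$-terms involve $m_1,m_2$ on $(-\infty,u)$ only through integrals against the kernels, so the plan has two steps. First show that \eqref{eq:inv} holds iff the local and nonlocal identities hold separately. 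Then read off the coefficient conditions in each.

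\textbf{Local part.} Compare the two displayed expansions coefficient by coefficient. The $f'''$ terms agree automatically. The $f''$ coefficients give $A\alpha_j-2A\xi'-B\xi=\psi_jA-\xi A'-\xi B$, which is \eqref{eq:D1} after cancelling $B\xi$. The $f'$ coefficients give $2A\alpha_j'-A\xi''+B\alpha_j-B\xi'=\psi_jB-\xi B'$ after cancelling $c_j\xi$; this is $-1$ times \eqref{eq:D2}. The $f$ coefficients give \eqref{eq:D3}.

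\textbf{Nonlocal part.} Use the integration by parts already carried out, with the boundary terms vanishing by the growth bounds defining $\F$ and the polynomial growth of $\xi$. The convolution identity then becomes $\sum_i\int_0^\infty\Theta_{ij}(u,y)m_i(u-y)\,dy=0$. Vanishing of $\Theta_{ij}$ a.e.\ is exactly \eqref{eq:D4}.

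\textbf{Main obstacle: decoupling the two parts.} For sufficiency we simply add the identities. Necessity needs care. Fix $u$ and choose $m\in\F$ whose jets $(f,f',f'',f''')$ at $u$ are prescribed while its values on $(-\infty,u)$ vary independently. This is possible with smooth compactly supported perturbations near $u$, which change the integrals by arbitrarily little, plus bump functions supported in $(-\infty,u-\epsilon)$, whose local jet at $u$ is zero.

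Step one: take $m$ supported in a shrinking neighbourhood of $u$. The $\K$-contributions then tend to zero by absolute continuity of the kernels, with $k_i\le\beta$ bounded and $\Theta_{ij}$ locally integrable. This forces the local coefficient identities.

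Step two: with the local part now known to vanish, the integral identity must hold for all $m\in\F$. Testing against bumps in $(-\infty,u)$ for each $i$ separately, with the other component set to zero, gives $\Theta_{ij}(u,\cdot)=0$ a.e. by the fundamental lemma of the calculus of variations. Here $\Theta_{ij}$ is a distributional $y$-derivative plus an $L^1_{\mathrm{loc}}$ function, so I would state \eqref{eq:D4} in that weak sense. When $G$ is absolutely continuous, or constant, $k_i$ is absolutely continuous and the derivative is classical a.e.

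Finally, \eqref{eq:D1}--\eqref{eq:D3} hold for all $u\in\R$, not only $u>0$, because \eqref{eq:inv} is required on all of $\R$ for $m\in\F$.
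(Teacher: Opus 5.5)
Your proposal is correct and follows essentially the same route as the paper: matching the $f''$, $f'$, $f$ coefficients of the two local expansions gives \eqref{eq:D1}--\eqref{eq:D3}, and the integration by parts followed by the fundamental lemma applied to $\Theta_{ij}(u,\cdot)$ gives \eqref{eq:D4}. Your explicit decoupling argument (shrinking supports to isolate the jet at $u$, bumps in $(-\infty,u)$ to isolate the kernel) and your weak reading of \eqref{eq:D4} make rigorous what the paper asserts in one line. The weak reading is in fact the correct formulation when $G$ has atoms, a case the paper's a.e.\ version glosses over but never uses, since the lemma is only applied under Assumption~\ref{ass:G}.
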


\begin{proof}
The coefficient of $f'''$ is $-A\xi$ on both sides. Equating the coefficients of $f''$, $f'$ and $f$ in the two displays of the local part gives \eqref{eq:D1}--\eqref{eq:D3}. Explicitly, the $f''$-coefficient gives $A\alpha_j-2A\xi'-B\xi=\psi_jA-\xi A'-\xi B$, which is \eqref{eq:D1}. The $f'$-coefficient gives $2A\alpha_j'-A\xi''+B\alpha_j-B\xi'=\psi_jB-\xi B'$, which is \eqref{eq:D2}. The $f$-coefficient gives $A\alpha_j''+B\alpha_j'-c_j\alpha_j=-\psi_jc_j$, which is \eqref{eq:D3}. The nonlocal part gives \eqref{eq:D4}.
\end{proof}

\begin{remark}\label{rem:partial}
Equations \eqref{eq:D1}--\eqref{eq:D2} alone, with $\psi_j\equiv0$ and $\alpha_1=\alpha_2$, reduce to the second-order equation
\[
3A^2\xi''+A(B-2A')\xi'-\big[A'(B-2A')+A(2A''-B')\big]\xi=0
\]
for $\xi$. For constant $A=\tfrac12\sigma_1^2$ and $B=c$, this has the solutions $\xi\in\operatorname{span}\{1,e^{-\gamma u}\}$ with $\gamma=2c/(3\sigma_1^2)$, and for $\sigma_2=0$, $a\neq0$ the Gaussian-type solution $\xi=\exp\{-(au+2c)u/(3\sigma_1^2)\}$. These candidates are not symmetries. For $\xi=e^{-\gamma u}$ and $\alpha=-2\gamma e^{-\gamma u}$, the left side of \eqref{eq:D3} equals $2\gamma e^{-\gamma u}\big[4c^2/(9\sigma_1^2)+\lambda_j+\delta\big]\neq0$, and \eqref{eq:D4} fails as well (Theorem~\ref{thm:classification}). All four groups of determining equations must therefore be imposed.
\end{remark}

\section{Group classification}\label{sec:classification}

We now solve the determining equations completely. The dependence between claims and inter-claim times ($\lambda_1\neq\lambda_2$) and the structure of the convolution kernel together make the classification rigid.

\subsection{Hypothesis on the threshold distribution}

We first state the hypothesis on $G$ under which the classification is proved.

\begin{assumption}\label{ass:G}
Either (a) $G$ is absolutely continuous on $[0,\infty)$ and not identically $0$ or $1$ on $(0,\infty)$, or (b) $G\equiv g_0$ on $(0,\infty)$ for a constant $g_0\in(0,1)$ (the next regime is chosen at random, independently of the claim size).
\end{assumption}

Case (a) includes the exponential threshold $G(y)=1-e^{-\kappa y}$. In case (a) the set $E=\{y>0:0<G(y)<1,\ G'(y)>0\}$ has positive Lebesgue measure.

\subsection{The classification theorem}

The main result of this section is the following.

\begin{theorem}\label{thm:classification}
Let $\lambda_1\neq\lambda_2$, let $A,B$ be given by \eqref{eq:AB}, and let Assumption~\ref{ass:G} hold. Then the generator \eqref{eq:X} is admitted by $\G m=0$ if and only if $\G h=0$, $\alpha_1=\alpha_2$ is constant, and
\begin{enumerate}
\item[(i)] $\xi$ is constant, if $\sigma_2=0$ and $a=0$;
\item[(ii)] $\xi\equiv0$, if $\sigma_2\neq0$ or $a\neq0$.
\end{enumerate}
Consequently, the symmetry algebra is
\[
\operatorname{span}\{\pd_u,\,Y_0\}\oplus\{h\cdot\pd_m:\G h=0\}\quad\text{if }\sigma_2=a=0,\qquad
\operatorname{span}\{Y_0\}\oplus\{h\cdot\pd_m:\G h=0\}\quad\text{otherwise.}
\]
\end{theorem}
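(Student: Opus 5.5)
The plan is to solve the determining equations of Lemma~\ref{lem:determining} in a fixed order: first use \eqref{eq:D4} and \eqref{eq:D1} to force the two regimes to share the same $\psi_j$ and $\alpha_j$; then use \eqref{eq:D3} together with $\lambda_1\neq\lambda_2$ to pin down $\xi$ up to a constant factor; then return to \eqref{eq:D4} and \eqref{eq:D2} to make $\xi$ and $\alpha$ constant and to produce the dichotomy (i)/(ii). Sufficiency is a direct check. For $\xi\equiv\xi_0$ and $\alpha_1=\alpha_2=\psi_1=\psi_2$ all constant, \eqref{eq:D1}, \eqref{eq:D3} and \eqref{eq:D4} hold trivially. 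Equation \eqref{eq:D2} reduces to $\xi_0B'=\xi_0a=0$, and \eqref{eq:D1} is $0=-\xi_0A'=-\xi_0\sigma_2^2u$. So a nonzero $\xi_0$ is allowed exactly when $\sigma_2=a=0$.

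For necessity I begin with \eqref{eq:D4}. Its left side does not depend on $j$. Hence $k_i(y)\big(\psi_1(u)-\psi_2(u)\big)=0$ for a.e.\ $y$. Under either alternative of Assumption~\ref{ass:G}, some $k_i$ is positive on a set of positive measure, so $\psi_1=\psi_2=:\psi$. The right side of \eqref{eq:D1} is also independent of $j$ and $A\ge\tfrac12\sigma_1^2>0$, so $\alpha_1-\psi=\alpha_2-\psi$, which gives $\alpha_1=\alpha_2=:\alpha$. Subtracting the two instances of \eqref{eq:D3} then leaves $(c_1-c_2)(\alpha-\psi)=0$. Since $c_1-c_2=\lambda_1-\lambda_2\neq0$, this forces $\alpha=\psi$. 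Substituting into \eqref{eq:D1} gives $2A\xi'=\xi A'$, so $\xi=C\sqrt{A}=C\big(\tfrac12(\sigma_1^2+\sigma_2^2u^2)\big)^{1/2}$. Equation \eqref{eq:D3} becomes $A\alpha''+B\alpha'=0$.

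Next I return to \eqref{eq:D4} with $\alpha=\psi$. Expanding the $y$-derivative, it reads
\[
k_i'(y)\big(\xi(u-y)-\xi(u)\big)-k_i(y)\xi'(u-y)=k_i(y)\big(\alpha(u-y)-\alpha(u)\big)
\]
for a.e.\ $y>0$ and all $u$. I fix a $y_0$ at which $k_i(y_0)>0$, $k_i$ is differentiable and the identity holds; for $k_2$ in case (b), or for $y_0\in E$ in case (a), such $y_0$ form a set of positive measure. Then $\xi'(u-y_0)$ is expressed through differences of $\xi$ and $\alpha$. In case (b) one has $k_i'/k_i=-\beta$, so the relation holds for all $y>0$. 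Letting $y\to0^+$ gives $\xi'(u)=0$. In case (a) I intend to exploit variation in $y$: I divide by $k_i$ and subtract the $i=1$ and $i=2$ versions on $E$. This yields $\frac{G'}{G(1-G)}(y)\big(\xi(u-y)-\xi(u)\big)=0$ on $E$, so $\xi(u-y)=\xi(u)$ for $y$ in a set of positive measure and all $u$. Together with the explicit form $\xi=C\sqrt{A}$, which is even and strictly monotone on each half-line when $\sigma_2\neq0$, this forces $\xi$ to be constant. With $\xi$ constant, the displayed identity reduces to $k_i(y)\big(\alpha(u-y)-\alpha(u)\big)=0$, so $\alpha$ is constant. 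Finally, $\xi=C\sqrt{A}$ constant gives $C\sigma_2=0$, and \eqref{eq:D2} gives $\xi a=0$. This is exactly the dichotomy (i)/(ii), and the description of the algebra follows.

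The main obstacle is extracting $\xi'=0$ from \eqref{eq:D4} under the weak regularity of $G$ (only absolute continuity) and possible vanishing of $k_1$ near $y=0$. A limit $y\to0^+$ is unavailable in general, so the argument has to work on the positive-measure set $E$. If the subtraction trick is too delicate there, I would fall back on a periodicity argument. A continuous $\xi$ with $\xi(u-y)=\xi(u)$ for all $y$ in a set of positive measure is constant, by Steinhaus' theorem (the difference set contains an interval). This conclusion is compatible with $C\sqrt{A}$ only when $C\sigma_2=0$.
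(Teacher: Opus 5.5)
Your proposal is correct and follows the paper's proof essentially step for step: equal multipliers from \eqref{eq:D4}, equal scalings from \eqref{eq:D1}, $\alpha=\psi$ from \eqref{eq:D3} with $\lambda_1\neq\lambda_2$, the $G'/(G(1-G))$ subtraction on $E$ in case (a), then constancy of $\alpha$ and the dichotomy from \eqref{eq:D1}--\eqref{eq:D2}. The differences are local and harmless: you integrate $2A\xi'=\xi A'$ to $\xi=C\sqrt{A}$, so a single period already forces $\xi$ to be constant where the paper uses Steinhaus' theorem, and in case (b) you let $y\to0^+$ rather than separating variables (also, in your sufficiency check \eqref{eq:D1} is not trivial but is the condition $\xi_0A'=0$, as your next sentence correctly says).
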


\begin{proof}
\emph{Step 1: the multipliers coincide.} By Assumption~\ref{ass:G}, neither $k_1$ nor $k_2$ vanishes identically. Comparing \eqref{eq:D4} for $j=1$ and $j=2$ with the same $i$ gives $k_i(y)\big(\psi_1(u)-\psi_2(u)\big)=0$, hence $\psi_1=\psi_2=:\psi$.

\emph{Step 2: the scalings coincide.} By \eqref{eq:D1}, $\alpha_j-\psi=2\xi'-\xi A'/A$ for $j=1,2$, so $\alpha_1=\alpha_2=:\alpha$.

\emph{Step 3: $\alpha=\psi$.} Subtracting \eqref{eq:D3} for $j=1$ and $j=2$ gives $(c_1-c_2)(\alpha-\psi)=0$. Since $c_1-c_2=\lambda_1-\lambda_2\neq0$, $\alpha=\psi$. Then \eqref{eq:D3} reduces to $A\alpha''+B\alpha'=0$, and \eqref{eq:D1} to
\begin{equation}\label{eq:xiA}
2A\xi'=\xi A'.
\end{equation}

\emph{Step 4: $\xi$ is constant.} Consider first case (a) of Assumption~\ref{ass:G}. On the set $E$ both $k_1$ and $k_2$ are positive and differentiable, with logarithmic derivatives $\ell_1=-\beta+G'/G$ and $\ell_2=-\beta-G'/(1-G)$. Dividing \eqref{eq:D4} by $k_i$ gives, for $y\in E$ and all $u$,
\[
\ell_i(y)\big(\xi(u-y)-\xi(u)\big)-\xi'(u-y)=\alpha(u-y)-\psi(u),\qquad i=1,2 .
\]
The right side does not depend on $i$. Subtracting the two equations gives
\[
\frac{G'(y)}{G(y)(1-G(y))}\big(\xi(u-y)-\xi(u)\big)=0,\qquad y\in E .
\]
Hence $\xi(u-y)=\xi(u)$ for all $u\in\R$ and all $y\in E$. So every element of $E$ is a period of the continuous function $\xi$. By Steinhaus' theorem, $E-E$ contains an interval around $0$, so $\xi$ has arbitrarily small periods and is therefore constant.

In case (b), $k_i=\beta g_ie^{-\beta y}$ with $g_1=g_0$ and $g_2=1-g_0$, so $\ell_1=\ell_2=-\beta$. With $s=u-y$, \eqref{eq:D4} becomes
\[
\xi'(s)+\beta\xi(s)+\alpha(s)=\beta\xi(u)+\psi(u)\qquad\text{for all }s<u .
\]
The left side depends only on $s$ and the right side only on $u$, so both equal a constant $\kappa_0$. Thus $\psi=\kappa_0-\beta\xi$ and $\alpha=\kappa_0-\beta\xi-\xi'$. Since $\alpha=\psi$ by Step~3, $\xi'=0$, and $\xi$ is constant.

\emph{Step 5: $\alpha$ is constant.} With $\xi$ constant, the left side of \eqref{eq:D4} vanishes, so $\alpha(u-y)=\psi(u)$ for almost every $y$ with $k_i(y)\neq0$. Hence $\alpha$ is constant, and $\psi=\alpha$.

\emph{Step 6: the role of the coefficients.} With $\xi$ and $\alpha$ constant, \eqref{eq:xiA} reads $\xi A'=0$, and \eqref{eq:D2} reads $-\xi B'=0$, while \eqref{eq:D3} holds trivially. If $\sigma_2=a=0$, then $A'=B'=0$ and every constant $\xi$ is allowed, which is case (i). If $\sigma_2\neq0$ then $A'\not\equiv0$, and if $a\neq0$ then $B'\not\equiv0$; in both cases $\xi=0$, which is case (ii).

\emph{Converse.} If $\xi$ and $\alpha$ are constants and $\sigma_2=a=0$, then \eqref{eq:D1}--\eqref{eq:D4} hold with $\psi=\alpha$. If $\xi=0$ and $\alpha$ is constant, they hold for any $A,B$.
\end{proof}

\subsection{Consequences}

We record two consequences of the theorem.

\begin{corollary}\label{cor:breaking}
If the volatility or the premium depends on the surplus ($\sigma_2\neq0$ or $a\neq0$), the Gerber--Shiu system admits no Lie point symmetry beyond scaling and superposition. In particular, it admits no non-trivial similarity reduction.
\end{corollary}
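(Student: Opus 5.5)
The corollary is a direct consequence of Theorem~\ref{thm:classification}, case (ii), and the plan is to read it off in two stages: first identify the full symmetry algebra, then argue that none of its elements yields a similarity reduction.

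\emph{First claim: no symmetry beyond scaling and superposition.} Assume $\sigma_2\neq0$ or $a\neq0$. The remaining hypotheses of Theorem~\ref{thm:classification} ($\lambda_1\neq\lambda_2$, Assumption~\ref{ass:G}) are the standing ones, so the theorem applies. It says that any admitted generator \eqref{eq:X} has $\xi\equiv0$, $\alpha_1=\alpha_2=\alpha$ constant, and $\G h=0$. Hence every admitted generator has the form
\[
X=\alpha Y_0+h\cdot\pd_m ,
\]
which is a combination of the scaling and a superposition symmetry. This is exactly the first assertion.

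\emph{Second claim: no non-trivial similarity reduction.} I interpret a similarity reduction as a reduction of $\G m=0$ to an equation in fewer independent variables, or to an algebraic system, obtained by imposing invariance under a generator with nonzero $u$-component. Two observations give the claim.

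\begin{enumerate}
\item Every admitted $X$ has $\xi\equiv0$, so it is purely vertical. Its group action fixes $u$ and acts only on the fiber: $m\mapsto e^{\alpha\varepsilon}m+(\text{particular solution})$.
\item The invariance condition $Q_X[m]=0$ then reads $\alpha m+h=0$. This is an algebraic constraint, not an ODE reduction.
   \begin{itemize}
   \item If $\alpha\neq0$, it gives $m=-h/\alpha$. That is already a solution of $\G m=0$, so nothing new is produced.
   \item If $\alpha=0$, the generator is $h\cdot\pd_m$. Unless $h=0$, it has no invariant solutions at all.
   \end{itemize}
\end{enumerate}

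Thus no invariant ansatz of the form $m=e^{-\gamma u}v$ is available. By contrast, in case (i) of the theorem the combination $\pd_u-\gamma Y_0$ is admitted and does give such an ansatz.

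The only delicate step is stating precisely what "similarity reduction" means. Once it is pinned down as above, the argument is immediate from Theorem~\ref{thm:classification}.
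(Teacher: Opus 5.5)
Your proposal is correct and matches the paper, which records the corollary without a separate proof as an immediate consequence of Theorem~\ref{thm:classification}(ii): every admitted generator is $\alpha Y_0+h\cdot\pd_m$. Your added observation makes precise what the paper leaves implicit about ``no non-trivial similarity reduction'': with $\xi\equiv0$ the invariance condition $\alpha m+h=0$ is purely algebraic, so it yields only the known solution $m=-h/\alpha$ or nothing at all.
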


\begin{remark}\label{rem:indep}
The hypothesis $\lambda_1\neq\lambda_2$ is used only in Step~3. In the independent case $\lambda_1=\lambda_2$ the two components satisfy the same scalar equation, and \eqref{eq:D3} no longer forces $\alpha=\psi$. Steps~4--6 then need to be redone for the scalar equation. This is a separate classification, which we do not pursue; the dependence between claims and inter-claim times is precisely what makes the classification above rigid.
\end{remark}

\section{Optimal system, invariant solutions and the generalized Lundberg equation}\label{sec:invariant}

Throughout this section $\sigma_2=a=0$, so $A\equiv A_0=\tfrac12\sigma_1^2$ and $B\equiv B_0=c$, and $\G_0$ denotes the corresponding operator. We construct the optimal system of one-dimensional subalgebras, determine the invariant solutions, and show that they are governed by a generalized Lundberg equation.

\subsection{Optimal system}

We begin with the algebra itself. By Theorem~\ref{thm:classification}, the finite-dimensional part of the symmetry algebra is spanned by
\[
X_1=\pd_u,\qquad Y_0=m_1\pd_{m_1}+m_2\pd_{m_2}.
\]
Their commutator is $[X_1,Y_0]=0$, so the algebra is abelian and its adjoint action is trivial. Hence two one-dimensional subalgebras are equivalent if and only if they coincide.

\begin{theorem}\label{thm:optimal}
An optimal system of one-dimensional subalgebras of $\operatorname{span}\{X_1,Y_0\}$ is
\[
\big\{\langle Y_0\rangle\big\}\cup\big\{\langle X_1-\gamma Y_0\rangle:\gamma\in\R\big\}.
\]
If complex invariant solutions are admitted, $\gamma$ ranges over $\C$.
\end{theorem}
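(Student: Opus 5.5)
The plan is to apply the standard definition of an optimal system, a set of representatives of the orbits of one-dimensional subalgebras under the adjoint group, to the two-dimensional algebra $\mathfrak g=\operatorname{span}\{X_1,Y_0\}$. The argument has three steps: compute the adjoint action, parametrize the one-dimensional subalgebras, and pick one normalized generator in each.

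\textbf{Step 1: the adjoint action is trivial.} First we verify $[X_1,Y_0]=0$ directly. The coefficients of $Y_0$ do not depend on $u$, and $X_1$ does not act on $m$, so the bracket vanishes. Hence $\mathfrak g$ is abelian. For every $Z\in\mathfrak g$ we then have $\operatorname{Ad}(\exp(\varepsilon Z))W=W$, by the Lie series $\sum_k \varepsilon^k(\operatorname{ad}Z)^kW/k!$, which terminates at the first term. So each orbit consists of a single subalgebra, and an optimal system must contain every one-dimensional subalgebra exactly once.

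\textbf{Step 2: enumerate the subalgebras.} Take a nonzero $W=a_1X_1+a_0Y_0$. If $a_1=0$, then $a_0\neq0$ and $\langle W\rangle=\langle Y_0\rangle$. If $a_1\neq0$, we rescale by $1/a_1$ to get $\langle W\rangle=\langle X_1-\gamma Y_0\rangle$ with $\gamma=-a_0/a_1$. This $\gamma$ is uniquely determined by the line, since two generators $X_1-\gamma Y_0$ and $X_1-\gamma' Y_0$ span the same line only if $\gamma=\gamma'$. The same line cannot also equal $\langle Y_0\rangle$, because the $X_1$-coefficient is nonzero. The listed family is therefore complete and free of redundancy, and this gives the real statement.

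\textbf{Step 3: complex coefficients.} If complex invariant solutions are admitted, the symmetries act on $\C^2$-valued $m$, as in the class $\F$. The generators $X_1$ and $Y_0$ remain admitted by Theorem~\ref{thm:classification}, and Steps~1--2 go through verbatim over $\C$ with $a_0,a_1\in\C$. So $\gamma$ ranges over $\C$.

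There is no real obstacle here. The only point needing care is justifying that the adjoint group is trivial, which follows from commutativity. One might also note that the superposition part is excluded, because it is infinite-dimensional and the statement concerns only the finite-dimensional part $\mathfrak g$.
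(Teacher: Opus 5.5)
Your proposal is correct and matches the paper's argument: $[X_1,Y_0]=0$ makes the adjoint action trivial, so each one-dimensional subalgebra is its own orbit, and normalizing the $X_1$-coefficient of $aX_1+bY_0$ gives $\langle Y_0\rangle$ or $\langle X_1-\gamma Y_0\rangle$ with $\gamma=-b/a$. Your explicit checks that $\gamma$ is unique for each line and that the argument carries over verbatim to complex coefficients only spell out what the paper leaves implicit.
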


\begin{proof}
Every non-zero element is $aX_1+bY_0$. If $a=0$ it spans $\langle Y_0\rangle$. Otherwise, dividing by $a$ gives $X_1-\gamma Y_0$ with $\gamma=-b/a$. Since the adjoint action is trivial, distinct subalgebras are inequivalent.
\end{proof}

\subsection{Invariant solutions}

Next we determine the solutions invariant under each subalgebra. A solution is invariant under $X$ if its characteristic vanishes. For $Y_0$ the characteristic is $m$, so the only invariant solution is $m=0$. For $X_\gamma=X_1-\gamma Y_0$ the characteristic is $-m'-\gamma m$, so the invariant functions are $m(u)=e^{-\gamma u}k$ with $k\in\C^2$ constant. They belong to $\F$ precisely when $\operatorname{Re}\gamma<\beta$. To substitute them into the equation, introduce, for $\operatorname{Re}\gamma<\beta$,
\begin{equation}\label{eq:pg}
p(\gamma)=\frac{\beta}{\beta-\gamma},\qquad g(\gamma)=\beta\int_0^\infty e^{-(\beta-\gamma)y}G(y)\,dy,\qquad \Phi(\gamma)=A_0\gamma^2-B_0\gamma,
\end{equation}
and the \emph{generalized Lundberg matrix}
\begin{equation}\label{eq:M}
M(\gamma)=\begin{pmatrix}
\Phi(\gamma)-\lambda_1-\delta+\lambda_1g(\gamma) & \lambda_1\big(p(\gamma)-g(\gamma)\big)\\[2pt]
\lambda_2g(\gamma) & \Phi(\gamma)-\lambda_2-\delta+\lambda_2\big(p(\gamma)-g(\gamma)\big)
\end{pmatrix}.
\end{equation}
Write $d(\gamma)=\det M(\gamma)$ and $T(\gamma)=\tr M(\gamma)$.

\begin{theorem}\label{thm:lundberg}
Let $\operatorname{Re}\gamma<\beta$. Then $\G_0[e^{-\gamma u}k]=e^{-\gamma u}M(\gamma)k$. Consequently, the far-field system admits a non-zero $X_\gamma$-invariant solution if and only if
\begin{equation}\label{eq:lundberg}
d(\gamma)=\det M(\gamma)=0,
\end{equation}
and the invariant solutions are $e^{-\gamma u}k$ with $k\in\ker M(\gamma)$.
\end{theorem}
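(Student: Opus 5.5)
The plan is a direct substitution of the ansatz $m(u)=e^{-\gamma u}k$, $k=(k_1,k_2)\in\C^2$, into the operator $\G_0$ defined by \eqref{eq:K}--\eqref{eq:G} with $A\equiv A_0$, $B\equiv B_0$. First, since $\operatorname{Re}\gamma<\beta$, the function $m$ belongs to $\F$: for $x\le0$ we have $|m^{(k)}(x)|=|\gamma|^k|k|e^{-(\operatorname{Re}\gamma)x}$, which is of the form $C e^{-\theta x}$ with $\theta=\operatorname{Re}\gamma<\beta$. Hence $\K m$ converges absolutely by the remark following the definition of $\F$, and every step below is legitimate.

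Next I compute the two pieces of $(\G_0 m)_j$. The local part is immediate: $A_0m_j''+B_0m_j'-(\lambda_j+\delta)m_j=\big(A_0\gamma^2-B_0\gamma-\lambda_j-\delta\big)e^{-\gamma u}k_j=\big(\Phi(\gamma)-\lambda_j-\delta\big)e^{-\gamma u}k_j$. For the convolution, I use $m_i(u-y)=e^{-\gamma u}e^{\gamma y}k_i$, which gives
\[
(\K m)(u)=e^{-\gamma u}\Big(k_1\beta\int_0^\infty e^{-(\beta-\gamma)y}G(y)\,dy+k_2\beta\int_0^\infty e^{-(\beta-\gamma)y}\big(1-G(y)\big)\,dy\Big)=e^{-\gamma u}\big(g(\gamma)k_1+(p(\gamma)-g(\gamma))k_2\big).
\]
Here I use $\beta\int_0^\infty e^{-(\beta-\gamma)y}dy=\beta/(\beta-\gamma)=p(\gamma)$, and both integrals converge because $0\le G\le1$ and $\operatorname{Re}(\beta-\gamma)>0$.

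Adding $\lambda_j$ times this expression to the local part gives the $j$-th row of $e^{-\gamma u}M(\gamma)k$. For $j=1$ the coefficients are $\Phi-\lambda_1-\delta+\lambda_1 g$ and $\lambda_1(p-g)$. For $j=2$ they are $\lambda_2 g$ and $\Phi-\lambda_2-\delta+\lambda_2(p-g)$. This matches \eqref{eq:M} entrywise and proves the identity $\G_0[e^{-\gamma u}k]=e^{-\gamma u}M(\gamma)k$.

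For the consequence, the computation in the preceding subsection already shows that the $X_\gamma$-invariant functions in $\F$ are exactly $e^{-\gamma u}k$ with $k\in\C^2$ constant: the vanishing characteristic $-m'-\gamma m=0$ integrates componentwise. Since $e^{-\gamma u}\neq0$, such a function solves $\G_0m=0$ if and only if $M(\gamma)k=0$. A nonzero solution therefore exists if and only if $M(\gamma)$ is singular, that is, $d(\gamma)=0$, and the invariant solutions are precisely $e^{-\gamma u}k$ with $k\in\ker M(\gamma)$.

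There is no real obstacle. The only points needing care are the convergence and membership in $\F$ (which is where $\operatorname{Re}\gamma<\beta$ enters) and keeping the matrix indices in the right order, namely that row $j$ carries the factor $\lambda_j$ while column $i$ carries the kernel $k_i$.
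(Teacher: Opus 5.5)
Your proposal is correct and follows the paper's proof essentially verbatim. You substitute $e^{-\gamma u}k$, read off $(\Phi(\gamma)-\lambda_j-\delta)k_j$ from the local part and $g(\gamma)k_1+(p(\gamma)-g(\gamma))k_2$ from the convolution, and conclude via $\ker M(\gamma)$, exactly as the paper does. Your explicit check that $e^{-\gamma u}k\in\F$ when $\operatorname{Re}\gamma<\beta$ (the paper only asserts this just before the theorem) is a welcome addition, though you should not reuse $k$ both as the derivative order and as the constant vector.
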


\begin{proof}
For $m=e^{-\gamma u}k$ we have $m_j'=-\gamma m_j$ and $m_j''=\gamma^2m_j$, so the local part of row $j$ equals $(\Phi(\gamma)-\lambda_j-\delta)e^{-\gamma u}k_j$. In the convolution, $m_i(u-y)=e^{-\gamma u}e^{\gamma y}k_i$, so
\[
\K m(u)=e^{-\gamma u}\Big[k_1\beta\!\int_0^\infty\!e^{-(\beta-\gamma)y}G(y)\,dy+k_2\beta\!\int_0^\infty\!e^{-(\beta-\gamma)y}\big(1-G(y)\big)dy\Big]=e^{-\gamma u}\big[g\,k_1+(p-g)\,k_2\big].
\]
Adding $\lambda_j\K m$ to the local part gives row $j$ of $M(\gamma)k$, times $e^{-\gamma u}$. A non-zero $k$ with $M(\gamma)k=0$ exists if and only if $d(\gamma)=0$.
\end{proof}

Equation \eqref{eq:lundberg} is the \emph{generalized coupled Lundberg equation} of the model. Theorem~\ref{thm:lundberg} derives it entirely from the invariance of the far-field system under $X_\gamma$.

\subsection{Structure of the Lundberg determinant}

The determinant has a simple structure that makes it easy to analyse.

\begin{lemma}\label{lem:reduced}
With $a_j(\gamma)=\Phi(\gamma)-\lambda_j-\delta$,
\begin{equation}\label{eq:reduced}
d=a_1a_2+\lambda_1\,g\,a_2+\lambda_2\,(p-g)\,a_1 .
\end{equation}
In particular, $d$ depends on the volatility, the premium and the discount rate only through $\Phi-\delta$, and $\pd d/\pd\Phi=-\pd d/\pd\delta=T$.
\end{lemma}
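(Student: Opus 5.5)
The plan is to expand the $2\times2$ determinant of \eqref{eq:M} directly, writing $M(\gamma)$ in terms of $a_j=\Phi-\lambda_j-\delta$:
\[
M=\begin{pmatrix} a_1+\lambda_1 g & \lambda_1(p-g)\\ \lambda_2 g & a_2+\lambda_2(p-g)\end{pmatrix}.
\]
Then
\[
d=(a_1+\lambda_1g)\big(a_2+\lambda_2(p-g)\big)-\lambda_1\lambda_2 g(p-g).
\]
Multiplying out, the cross term $\lambda_1\lambda_2 g(p-g)$ appears once with a plus sign and once with a minus sign, so it cancels. What remains is $a_1a_2+\lambda_1 g a_2+\lambda_2(p-g)a_1$, which is \eqref{eq:reduced}.

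For the dependence statement, we will observe that $p$ and $g$ depend only on $\beta$, $G$ and $\gamma$, and not on $A_0$, $B_0$ or $\delta$. Moreover, $a_j=(\Phi-\delta)-\lambda_j$. Hence, with $\lambda_j$, $\beta$ and $G$ fixed, $d$ is a function of the single combination $\Phi-\delta$. It follows immediately that $\pd d/\pd\Phi=-\pd d/\pd\delta$.

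It remains to compute $\pd d/\pd\Phi$ from \eqref{eq:reduced}, using $\pd a_j/\pd\Phi=1$:
\[
\pd d/\pd\Phi=a_2+a_1+\lambda_1 g+\lambda_2(p-g).
\]
This is exactly the sum of the diagonal entries of $M$, namely $T$. Alternatively, one can invoke Jacobi's formula: $M$ depends on $\Phi$ through $\Phi I$, so $\pd_\Phi\det M=\tr\operatorname{adj}(M)$, and for a $2\times2$ matrix $\tr\operatorname{adj}M=\tr M$.

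There is no real obstacle here. The only point needing care is the cancellation of the $\lambda_1\lambda_2 g(p-g)$ term, together with the remark that $\Phi$ and $\delta$ are treated as independent parameters when the partial derivatives are taken.
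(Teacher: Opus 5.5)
Your proof is correct and is essentially the paper's argument. The paper obtains the reduced formula by applying the matrix determinant lemma to the rank-one update $M=\operatorname{diag}(a_1,a_2)+(\lambda_1,\lambda_2)^{\mathsf T}(g,\,p-g)$, which is a structural way of seeing your cancellation of the $\lambda_1\lambda_2 g(p-g)$ term, and it gets $\pd d/\pd\Phi=-\pd d/\pd\delta=T$ from $\frac{d}{dt}\det(M+tI)\big|_{t=0}=\tr M$, which is your Jacobi-formula alternative; your direct differentiation of the reduced formula is an equally valid check.
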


\begin{proof}
$M=D+(\lambda_1,\lambda_2)^{\mathsf T}(g,\,p-g)$ with $D=\operatorname{diag}(a_1,a_2)$. By the matrix determinant lemma, $\det(D+xy^{\mathsf T})=\det D+y^{\mathsf T}\operatorname{diag}(a_2,a_1)x$, which is \eqref{eq:reduced}. The quantities $\Phi$ and $\delta$ enter $M$ only through $(\Phi-\delta)I$, and $\frac{d}{dt}\det(M+tI)\big|_{t=0}=\tr M$ for $2\times2$ matrices.
\end{proof}

\subsection{Special cases}

We now specialize to the two threshold distributions of main interest and to the limiting cases.

\begin{corollary}\label{cor:cases}
\begin{enumerate}
\item[(a)] \emph{Constant threshold $G\equiv g_0$.} Then $g=g_0p$, and
\[
M(\gamma)=\begin{pmatrix}
\frac{\sigma_1^2}{2}\gamma^2-c\gamma-\lambda_1-\delta+\dfrac{\lambda_1\beta g_0}{\beta-\gamma} & \dfrac{\lambda_1\beta(1-g_0)}{\beta-\gamma}\\[2.2ex]
\dfrac{\lambda_2\beta g_0}{\beta-\gamma} & \frac{\sigma_1^2}{2}\gamma^2-c\gamma-\lambda_2-\delta+\dfrac{\lambda_2\beta(1-g_0)}{\beta-\gamma}
\end{pmatrix}.
\]
\item[(b)] \emph{Exponential threshold $G(y)=1-e^{-\kappa y}$.} Then
\[
g(\gamma)=\frac{\beta\kappa}{(\beta-\gamma)(\beta+\kappa-\gamma)},\qquad p(\gamma)-g(\gamma)=\frac{\beta}{\beta+\kappa-\gamma}.
\]
\item[(c)] \emph{No switching.} If $g_0=0$ (always regime $2$), then $d=a_1\big(a_2+\lambda_2p\big)$. If $g_0=1$ (always regime $1$), then $d=a_2\big(a_1+\lambda_1p\big)$. In both cases the relevant factor,
\[
\tfrac12\sigma_1^2\gamma^2-c\gamma-\lambda-\delta+\frac{\lambda\beta}{\beta-\gamma},
\]
is the classical Lundberg function of the diffusion-perturbed compound Poisson model \cite{GerberLandry1998}, written in terms of $\gamma=-s$. The same holds for the exponential threshold in the limits $\kappa\to0$ and $\kappa\to\infty$.
\item[(d)] \emph{Independence.} If $\lambda_1=\lambda_2=\lambda$, then $d=a\,(a+\lambda p)$ with $a=\Phi-\lambda-\delta$, and again the second factor is the classical Lundberg function.
\end{enumerate}
\end{corollary}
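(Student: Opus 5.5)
The plan is to compute $g(\gamma)$ directly from the definition \eqref{eq:pg} in each case and then substitute into \eqref{eq:M} or the reduced form \eqref{eq:reduced} of Lemma~\ref{lem:reduced}. Throughout we take $\operatorname{Re}\gamma<\beta$, which makes every integral below absolutely convergent.

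For (a), $G\equiv g_0$ on $(0,\infty)$, so
\[
g(\gamma)=g_0\beta\int_0^\infty e^{-(\beta-\gamma)y}\,dy=\frac{g_0\beta}{\beta-\gamma}=g_0p(\gamma).
\]
Hence $p-g=(1-g_0)p$. Inserting this together with $\Phi(\gamma)=\tfrac12\sigma_1^2\gamma^2-c\gamma$ into \eqref{eq:M} gives the displayed matrix entry by entry.

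For (b), $G(y)=1-e^{-\kappa y}$, and we compute
\[
g=\beta\Big(\frac{1}{\beta-\gamma}-\frac{1}{\beta+\kappa-\gamma}\Big)=\frac{\beta\kappa}{(\beta-\gamma)(\beta+\kappa-\gamma)} .
\]
It follows that $p-g=\beta/(\beta+\kappa-\gamma)$. Alternatively, this is simply $\beta\int_0^\infty e^{-(\beta-\gamma)y}e^{-\kappa y}\,dy$.

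For (c) and (d), use \eqref{eq:reduced}, $d=a_1a_2+\lambda_1ga_2+\lambda_2(p-g)a_1$.
\begin{itemize}
\item If $g_0=0$, then $g=0$, so $d=a_1a_2+\lambda_2pa_1=a_1(a_2+\lambda_2p)$.
\item If $g_0=1$, then $g=p$, so $d=a_1a_2+\lambda_1pa_2=a_2(a_1+\lambda_1p)$.
\item In the independent case $a_1=a_2=a$, and $\lambda_1g+\lambda_2(p-g)=\lambda p$, so $d=a(a+\lambda p)$.
\end{itemize}
In each case the nontrivial factor is
\[
a_j+\lambda_jp=\tfrac12\sigma_1^2\gamma^2-c\gamma-\lambda_j-\delta+\frac{\lambda_j\beta}{\beta-\gamma}.
\]
Substituting $\gamma=-s$ turns this into the Gerber--Landry Lundberg function
\[
\tfrac12\sigma_1^2s^2+cs-(\lambda+\delta)+\lambda\frac{\beta}{\beta+s},
\]
the last term being the Laplace transform of the exponential claim-size density. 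This identification is the only step needing a citation check, and it is a matter of notation rather than difficulty.

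For the exponential limits, fix $\gamma$ with $\operatorname{Re}\gamma<\beta$.
\begin{itemize}
\item As $\kappa\to0$, the formula in (b) gives $g\to0$ and $p-g\to p$. By continuity of \eqref{eq:reduced} in $(g,p)$, $d$ converges to the $g_0=0$ expression $a_1(a_2+\lambda_2p)$.
\item As $\kappa\to\infty$, $g\to\beta/(\beta-\gamma)=p$ and $p-g\to0$, so $d\to a_2(a_1+\lambda_1p)$.
\end{itemize}
The convergence is locally uniform in $\gamma$, which suffices for the statement as phrased.

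No step is a real obstacle. The only points needing care are the convergence condition $\operatorname{Re}\gamma<\beta$, which is needed for the integrals (and in (b) also implies $\operatorname{Re}(\beta+\kappa-\gamma)>0$), and the sign convention $\gamma=-s$ when matching to \cite{GerberLandry1998}.
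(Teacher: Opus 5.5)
Your proposal is correct and follows the paper's own route: compute $g$ from \eqref{eq:pg} in each case, splitting the exponential-threshold integral into two elementary integrals, and substitute into \eqref{eq:M} and the reduced determinant \eqref{eq:reduced}. Your explicit checks of the $\kappa\to0$ and $\kappa\to\infty$ limits and of the convention $\gamma=-s$ supply details the paper leaves to the reader.
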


\begin{proof}
All statements follow by substituting the corresponding $g$ into \eqref{eq:reduced}. For (b), the integral in \eqref{eq:pg} splits into two elementary exponential integrals.
\end{proof}

\section{The adjustment coefficient and Lundberg asymptotics}\label{sec:adjustment}

This section shows that the generalized Lundberg equation has a positive root, the adjustment coefficient, and determines its sensitivities. It then shows that this root governs the actual decay of the Gerber--Shiu functions.

\subsection{Existence and signs}

We first establish existence and the signs that drive everything later. Let $\pi=g(0)=\mathbb{P}(Y<X)$.

\begin{proposition}\label{prop:roots}
Assume $\sigma_2=a=0$, $\lambda_1\ne\lambda_2$, $0<\pi<1$, $\E[Y]<\infty$ and $A_0\beta^2-B_0\beta<\lambda_2+\delta$. Then:
\begin{enumerate}
\item[(i)] $d(0)=\delta^2+\delta\big[\lambda_1(1-\pi)+\lambda_2\pi\big]>0$.
\item[(ii)] $d$ has a root in $(0,\beta)$. Let $\gamma_0$ be the smallest one (the adjustment coefficient).
\item[(iii)] $M_{11}<0$ and $M_{22}<0$ on $[0,\gamma_0]$. Hence $T(\gamma_0)<0$ and $d'(\gamma_0)\le0$, with strict inequality if $\gamma_0$ is simple.
\item[(iv)] $d$ also has a root in $(\gamma_*,0)$, where $\Phi(\gamma_*)=\min(\lambda_1,\lambda_2)+\delta$.
\end{enumerate}
\end{proposition}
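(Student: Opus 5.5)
The plan is to compute everything from the reduced form \eqref{eq:reduced} of Lemma~\ref{lem:reduced}. On $(-\infty,\beta)$ the off-diagonal entries $M_{12}=\lambda_1(p-g)$ and $M_{21}=\lambda_2 g$ are strictly positive. This is because $g(\gamma)=\beta\int_0^\infty e^{-(\beta-\gamma)y}G\,dy>0$ and $p-g=\beta\int_0^\infty e^{-(\beta-\gamma)y}(1-G)\,dy>0$, and $0<\pi<1$ forces $G\not\equiv0,1$. By dominated convergence, $p$ and $g$, and hence $d$, are continuous on $(-\infty,\beta)$. The three existence claims then all come from the intermediate value theorem applied to $d$.

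For (i), set $\gamma=0$: $\Phi(0)=0$, $p(0)=1$, $g(0)=\pi$ and $a_j(0)=-(\lambda_j+\delta)$. Substituting into \eqref{eq:reduced} gives
\[
(\lambda_1+\delta)(\lambda_2+\delta)-\lambda_1\pi(\lambda_2+\delta)-\lambda_2(1-\pi)(\lambda_1+\delta).
\]
The $\lambda_1\lambda_2$ terms cancel, leaving $\delta^2+\delta[\lambda_1(1-\pi)+\lambda_2\pi]>0$.

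For (ii), I will examine the behaviour as $\gamma\uparrow\beta$. Here $p\to\infty$, while $p-g\to\beta\int_0^\infty(1-G)\,dy=\beta\,\E[Y]<\infty$ by monotone convergence. Hence $g=p-(p-g)\to+\infty$. The terms $a_1a_2$ and $\lambda_2(p-g)a_1$ remain bounded. The hypothesis $A_0\beta^2-B_0\beta<\lambda_2+\delta$ says exactly that $a_2(\beta)<0$, so $\lambda_1 g a_2\to-\infty$ and therefore $d\to-\infty$. Together with $d(0)>0$, this gives a root in $(0,\beta)$. The set of roots is closed in $(0,\beta)$ and bounded away from $0$ because $d(0)>0$, so the smallest root $\gamma_0$ exists.

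For (iii), which is the step needing an actual argument, I will argue by contradiction. At $\gamma=0$ both diagonal entries are negative: $M_{11}(0)=-\lambda_1(1-\pi)-\delta<0$ and $M_{22}(0)=-\lambda_2\pi-\delta<0$. Suppose some diagonal entry vanishes on $[0,\gamma_0]$, and let $\gamma_1$ be the first such point. Then
\[
d(\gamma_1)=M_{11}M_{22}-M_{12}M_{21}=-M_{12}M_{21}<0.
\]
If $\gamma_1<\gamma_0$, the intermediate value theorem gives a root of $d$ in $(0,\gamma_1)$, contradicting the minimality of $\gamma_0$. If $\gamma_1=\gamma_0$, we get $d(\gamma_0)<0$, contradicting $d(\gamma_0)=0$. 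Hence $M_{11},M_{22}<0$ on $[0,\gamma_0]$, and so $T(\gamma_0)<0$. Since $d>0$ on $[0,\gamma_0)$ and $d(\gamma_0)=0$, the difference quotients at $\gamma_0$ from the left are nonpositive, so $d'(\gamma_0)\le0$. If $\gamma_0$ is simple, then $d'(\gamma_0)\ne0$ and the inequality is strict.

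For (iv), let $\gamma_*<0$ be the negative root of $\Phi(\gamma)=\min(\lambda_1,\lambda_2)+\delta$. It exists because $\Phi$ is a convex quadratic with $\Phi(0)=0$ that increases to $\infty$ as $\gamma\to-\infty$. Two cases arise.
\begin{itemize}
\item If $\lambda_1<\lambda_2$, then $a_1(\gamma_*)=0$ and $a_2(\gamma_*)=\lambda_1-\lambda_2<0$, so $d(\gamma_*)=\lambda_1 g\,a_2<0$.
\item If $\lambda_2<\lambda_1$, then $a_2(\gamma_*)=0$ and $a_1(\gamma_*)=\lambda_2-\lambda_1<0$, so $d(\gamma_*)=\lambda_2(p-g)a_1<0$.
\end{itemize}
In either case $d(\gamma_*)<0<d(0)$, and continuity gives a root in $(\gamma_*,0)$.

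The main obstacle is the limit $\gamma\uparrow\beta$ in (ii), since it is the only place where $\E[Y]<\infty$ is needed. Without this hypothesis, $p-g$ could also diverge, and the term $\lambda_2(p-g)a_1$ would compete with $\lambda_1 g a_2$.
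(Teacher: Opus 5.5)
Your proposal is correct and follows the paper's proof essentially step for step. You use the same evaluation at $\gamma=0$, the same limit $\gamma\uparrow\beta$ via $p-g\to\beta\E[Y]$ and $a_2(\beta)<0$, and the same sign argument at $\gamma_*$; in (iii) your first-zero contradiction is just a rephrasing of the paper's identity $M_{11}M_{22}=d+M_{12}M_{21}>0$ on $[0,\gamma_0]$, and the extra details you supply (positivity of $M_{12},M_{21}$ from $0<\pi<1$, attainment of the smallest root, and both cases in (iv)) are correct and fill in points the paper leaves implicit.
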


\begin{proof}
(i) At $\gamma=0$: $\Phi=0$, $p=1$, $g=\pi$, $a_j=-(\lambda_j+\delta)$. By \eqref{eq:reduced},
\[
d(0)=(\lambda_1+\delta)(\lambda_2+\delta)-\lambda_1\pi(\lambda_2+\delta)-\lambda_2(1-\pi)(\lambda_1+\delta)=\delta^2+\delta\big[\lambda_1(1-\pi)+\lambda_2\pi\big]>0 .
\]
(ii) As $\gamma\uparrow\beta$, $p-g=\beta\int_0^\infty e^{-(\beta-\gamma)y}(1-G)\,dy\to\beta\E[Y]<\infty$ by monotone convergence, while $g\to\infty$. Hence $d=\lambda_1g\,a_2+O(1)$, with $a_2(\beta)=A_0\beta^2-B_0\beta-\lambda_2-\delta<0$. So $d\to-\infty$, and the intermediate value theorem gives a root in $(0,\beta)$.

(iii) On $[0,\beta)$, $M_{12}=\lambda_1(p-g)>0$ and $M_{21}=\lambda_2g>0$. Since $d>0$ on $[0,\gamma_0)$, $M_{11}M_{22}=d+M_{12}M_{21}>0$ on $[0,\gamma_0]$. So the diagonal entries do not vanish there and keep their signs at $0$: $M_{11}(0)=-\lambda_1(1-\pi)-\delta<0$ and $M_{22}(0)=-\lambda_2\pi-\delta<0$. Finally, $d$ is positive on $[0,\gamma_0)$ and vanishes at $\gamma_0$, so $d'(\gamma_0)\le0$.

(iv) Say $\lambda_2<\lambda_1$ (the other case is symmetric). At $\gamma_*<0$ with $\Phi(\gamma_*)=\lambda_2+\delta$ we have $a_2=0$ and $a_1=\lambda_2-\lambda_1<0$, so $d(\gamma_*)=\lambda_2(p-g)a_1<0$. With (i), there is a root in $(\gamma_*,0)$.
\end{proof}

\subsection{Sensitivities}

Next we compute how the adjustment coefficient responds to the model parameters. Let $\theta$ denote any model parameter, and write $\pd_\theta d$ for the partial derivative of $d$ at fixed $\gamma$.

\begin{proposition}\label{prop:sens}
Let $\gamma_0$ be a simple root. Then $\gamma_0$ is a smooth function of the parameters, with
\[
\frac{\pd\gamma_0}{\pd\theta}=-\frac{\pd_\theta d(\gamma_0)}{d'(\gamma_0)} .
\]
In particular, under the hypotheses of Proposition~\ref{prop:roots},
\[
\frac{\pd\gamma_0}{\pd c}=\frac{\gamma_0T}{d'}>0,\qquad \frac{\pd\gamma_0}{\pd(\sigma_1^2/2)}=-\frac{\gamma_0^2T}{d'}<0,\qquad \frac{\pd\gamma_0}{\pd\delta}=\frac{T}{d'}>0,
\]
with $T=T(\gamma_0)$ and $d'=d'(\gamma_0)$.
\end{proposition}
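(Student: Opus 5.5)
The plan is to apply the implicit function theorem to $F(\gamma,\theta)=d(\gamma;\theta)$ at $(\gamma_0,\theta)$. The determinant $d$ is built from $\Phi(\gamma)=A_0\gamma^2-B_0\gamma$, the constants $\lambda_j,\delta$, and the transforms $p(\gamma)$ and $g(\gamma)$ of \eqref{eq:pg}. All of these are real-analytic in $\gamma$ on $(-\infty,\beta)$ and smooth in the model parameters. For $g$ this holds because differentiation under the integral is justified by domination with $y^ke^{-(\beta-\gamma')y}$ for $\gamma<\gamma'<\beta$. A simple root means $d'(\gamma_0)\neq0$, so the implicit function theorem gives a smooth local branch $\gamma_0(\theta)$ with $d(\gamma_0(\theta);\theta)=0$. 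Differentiating this identity gives $d'(\gamma_0)\,\pd_\theta\gamma_0+\pd_\theta d(\gamma_0)=0$, which is the general formula.

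For the three explicit sensitivities I would use Lemma~\ref{lem:reduced}. By that lemma $d$ depends on $c$, $\sigma_1^2/2$ and $\delta$ only through $\Phi-\delta$, with $\pd d/\pd\Phi=T$ and $\pd d/\pd\delta=-T$. Since $\Phi=A_0\gamma^2-B_0\gamma$, the chain rule gives $\pd\Phi/\pd c=-\gamma$ and $\pd\Phi/\pd A_0=\gamma^2$. Hence at $\gamma_0$ we have $\pd_cd=-\gamma_0T$, $\pd_{A_0}d=\gamma_0^2T$ and $\pd_\delta d=-T$. Substituting these into $-\pd_\theta d/d'$ yields $\gamma_0T/d'$, $-\gamma_0^2T/d'$ and $T/d'$.

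The signs come from Proposition~\ref{prop:roots}(iii). There $T(\gamma_0)<0$, and since the root is simple, $d'(\gamma_0)<0$, so $T/d'>0$. Also $\gamma_0\in(0,\beta)$ gives $\gamma_0>0$ and $\gamma_0^2>0$. Therefore $\pd\gamma_0/\pd c>0$, $\pd\gamma_0/\pd(\sigma_1^2/2)<0$ and $\pd\gamma_0/\pd\delta>0$.

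One subtlety needs care. The smooth branch through $\gamma_0$ must remain the \emph{smallest} root in $(0,\beta)$ under small perturbations, so that the function being differentiated really is the adjustment coefficient. For this I would argue as follows. Proposition~\ref{prop:roots}(i) gives $d(0)>0$. On the compact interval $[0,\gamma_0-\epsilon]$ we have $d\ge\eta>0$, and by continuity in the parameters this bound persists nearby. Near $\gamma_0$ the implicit function theorem gives uniqueness of the root. So the smallest root varies smoothly. I expect this to be the only step requiring attention; the rest is direct computation.
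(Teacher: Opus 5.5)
Your proposal is correct and follows the paper's proof. The paper uses the implicit function theorem for the general formula. It then uses the chain rule through Lemma~\ref{lem:reduced} to get $\pd_cd=-\gamma_0T$, $\pd_{A_0}d=\gamma_0^2T$ and $\pd_\delta d=-T$, and takes the signs from $T(\gamma_0)<0$ and $d'(\gamma_0)<0$ in Proposition~\ref{prop:roots}(iii), exactly as you do.

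You also argue that the perturbed branch remains the smallest root in $(0,\beta)$, because $d\ge\eta>0$ on $[0,\gamma_0-\epsilon]$ persists under small perturbations. The paper leaves this detail implicit, and your argument for it is sound.
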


\begin{proof}
The first formula is the implicit function theorem applied to $d(\gamma;\theta)=0$. By Lemma~\ref{lem:reduced}, $\pd_cd=\pd_\Phi d\cdot\pd_c\Phi=-\gamma T$, $\pd_{A_0}d=\gamma^2T$ and $\pd_\delta d=-T$. The signs follow from $T<0$ and $d'<0$ (Proposition~\ref{prop:roots}(iii)).
\end{proof}

A higher premium therefore increases the adjustment coefficient, and a higher volatility decreases it, as expected. The effect of the dependence parameters $\lambda_1,\lambda_2,\kappa$ is given by the same formula and is quantified in Section~\ref{sec:numerics}.

\subsection{Cram\'er--Lundberg asymptotics for the exponential threshold}

Finally, we connect the invariant solutions to the actual Gerber--Shiu functions. For the exponential threshold the half-line system \eqref{eq:gs} is equivalent to a linear ODE system. Put
\begin{gather*}
Y=(m_1,m_1',m_2,m_2',J_1,J_2,J_3)^{\mathsf T},\\
J_1=\int_0^ue^{-\beta(u-x)}m_1\,dx,\qquad J_2=\int_0^ue^{-(\beta+\kappa)(u-x)}m_1\,dx,\qquad J_3=\int_0^ue^{-(\beta+\kappa)(u-x)}m_2\,dx .
\end{gather*}
Then the convolution in \eqref{eq:gs} equals $\beta(J_1-J_2+J_3)$, and $J_1'=m_1-\beta J_1$, $J_2'=m_1-(\beta+\kappa)J_2$, $J_3'=m_2-(\beta+\kappa)J_3$. Hence \eqref{eq:gs} with $\sigma_2=a=0$ reads
\begin{equation}\label{eq:ode7}
Y'=CY+e^{-\beta u}s,\qquad Y_1(0)=Y_3(0)=w(0),\quad Y_5(0)=Y_6(0)=Y_7(0)=0,
\end{equation}
with a constant matrix $C$ and a constant vector $s$, together with boundedness on $(0,\infty)$.

\begin{theorem}\label{thm:asymptotics}
Let $G(y)=1-e^{-\kappa y}$, $\sigma_2=a=0$, $\lambda_1\neq\lambda_2$, and let $\gamma_0\in(0,\beta)$ be a simple root of $d$ with $\Phi(\gamma_0)\notin\{\lambda_1+\delta,\lambda_2+\delta\}$. Assume:
\begin{itemize}
\item $C$ has no eigenvalue on the imaginary axis and exactly five eigenvalues with negative real part, all simple;
\item the $5\times5$ matrix formed by the components $1,3,5,6,7$ of the corresponding eigenvectors is invertible;
\item $-\beta$ is not an eigenvalue of $C$;
\item $-\gamma_0$ is the stable eigenvalue of largest real part, every other stable eigenvalue has real part $<-\gamma_0-\Delta$ for some $\Delta>0$, and the coefficient of $e^{-\gamma_0u}$ in the solution is non-zero.
\end{itemize}
Then there is $a_0\neq0$ such that
\begin{equation}\label{eq:asymp}
m_j(u)=a_0\,v_{0,j}\,e^{-\gamma_0u}\big(1+O(e^{-\Delta'u})\big),\qquad u\to\infty,\quad j=1,2,
\end{equation}
with $\Delta'=\min(\Delta,\beta-\gamma_0)$ and $v_0=\big(\lambda_1/(\lambda_1+\delta-\Phi(\gamma_0)),\ \lambda_2/(\lambda_2+\delta-\Phi(\gamma_0))\big)$, a null vector of $M(\gamma_0)$. In particular, $-m_j'(u)/m_j(u)\to\gamma_0$ and
\begin{equation}\label{eq:ratio}
\lim_{u\to\infty}\frac{m_2(u)}{m_1(u)}=\frac{\lambda_2\big(\lambda_1+\delta-\Phi(\gamma_0)\big)}{\lambda_1\big(\lambda_2+\delta-\Phi(\gamma_0)\big)} .
\end{equation}
\end{theorem}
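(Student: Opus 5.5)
The plan is to solve the linear ODE system \eqref{eq:ode7} explicitly and read off the leading exponential. Since $-\beta$ is not an eigenvalue of $C$, a particular solution is $Y_p(u)=e^{-\beta u}r$ with $r=-(C+\beta I)^{-1}s$. Every solution is then $Y=Y_p+e^{Cu}Y(0)$. Because $C$ has no imaginary-axis eigenvalues and its stable eigenvalues are simple, boundedness on $(0,\infty)$ forces $Y(0)-r$ to lie in the stable subspace. Hence
\[
Y(u)=e^{-\beta u}r+\sum_{k=1}^5 b_k e^{\mu_k u}w_k ,
\]
where $w_k$ are the eigenvectors for the stable eigenvalues $\mu_k$. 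The five initial conditions on components $1,3,5,6,7$ form a $5\times5$ linear system for $(b_k)$. It is uniquely solvable by the invertibility hypothesis. This identifies the Gerber--Shiu solution, which is bounded by Proposition~\ref{prop:gs}, with this representation.

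Next I will link the eigenvalue $-\gamma_0$ to the Lundberg matrix. If $Cw=-\gamma w$, then $e^{-\gamma u}w$ solves the homogeneous system. On the whole line, this corresponds to the exponential $e^{-\gamma u}(w_1,w_3)$, with $J$-components equal to the $\K$-integrals from $-\infty$. Equivalently, eliminating $J_1,J_2,J_3$ via $J_1=w_1/(\beta-\gamma)$ and $J_2=w_1/(\beta+\kappa-\gamma)$, $J_3=w_3/(\beta+\kappa-\gamma)$ in the first and third block rows recovers $M(\gamma)(w_1,w_3)^{\mathsf T}=0$, using Corollary~\ref{cor:cases}(b). The identity $\beta(J_1-J_2+J_3)=g\,w_1+(p-g)w_3$ gives exactly this. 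So the eigenvalues of $C$ away from $-\beta,-\beta-\kappa$ are the $-\gamma$ with $d(\gamma)=0$ (after clearing denominators), and $(w_1,w_3)\in\ker M(\gamma_0)$.

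To compute $v_0$ explicitly, I write the rows of $M(\gamma_0)k=0$ as $a_j k_j+\lambda_j\kappa_\ast=0$, where $\kappa_\ast=g k_1+(p-g)k_2$ is common to both rows. Since $a_j(\gamma_0)\neq0$ by the assumption $\Phi(\gamma_0)\ne\lambda_j+\delta$, this gives $k_j=\lambda_j\kappa_\ast/(\lambda_j+\delta-\Phi(\gamma_0))$. So $v_0$ spans the kernel, which is one-dimensional. It is nonzero because $\kappa_\ast=0$ would force $k=0$.

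Finally comes the asymptotic step. With $b_0\ne0$ the coefficient of $e^{-\gamma_0u}$ (assumed), I write $(m_1,m_2)=b_0(w_1,w_3)e^{-\gamma_0u}+O(e^{-(\gamma_0+\Delta)u})+O(e^{-\beta u})$. I then set $a_0v_0=b_0(w_1,w_3)$. Both components of $v_0$ are nonzero, so dividing gives the factor $1+O(e^{-\Delta' u})$ with $\Delta'=\min(\Delta,\beta-\gamma_0)$. The derivative statement follows in the same way from component $2$ (resp.\ $4$), which equals $-\gamma_0$ times component $1$ (resp.\ $3$) in the eigenvector. The ratio \eqref{eq:ratio} is $v_{0,2}/v_{0,1}$. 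I expect the main obstacle to be the careful identification of eigenvectors of $C$ with $\ker M$: I must verify that the elimination of the $J$'s is legitimate, i.e.\ that $\beta-\gamma_0$ and $\beta+\kappa-\gamma_0$ are nonzero, which holds since $\gamma_0<\beta$. The remaining steps are routine bookkeeping.
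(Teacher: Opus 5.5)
Your proposal is correct. Your first and last steps match the paper's Steps 1 and 3: the stable-subspace representation of the bounded solution, the unique determination of the $b_k$, and reading off the leading term and the ratio.

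The real difference is in how you place the slowest mode in $\ker M(\gamma_0)$. The paper substitutes the whole exponential polynomial, extended by the exterior data, into $\G_0m=0$. It evaluates the truncated convolutions $\int_0^ue^{-ry}e^{-\nu(u-y)}\,dy$ and isolates the frequency-$\gamma_0$ part by linear independence of exponentials. The mode then appears literally as an $X_{\gamma_0}$-invariant solution in the sense of Theorem~\ref{thm:lundberg}. That is the point of the paper's route: it ties the asymptotics back to the symmetry analysis.

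You use only the eigen-equation $Cw=-\gamma_0w$. The $J$-rows give $w_5=w_1/(\beta-\gamma_0)$, $w_6=w_1/(\beta+\kappa-\gamma_0)$ and $w_7=w_3/(\beta+\kappa-\gamma_0)$. The $m''$-rows then read $M(\gamma_0)(w_1,w_3)^{\mathsf T}=0$ by Corollary~\ref{cor:cases}(b). This is purely finite-dimensional and needs no bookkeeping of boundary terms or exterior data. As a by-product it gives the correspondence between eigenvalues of $C$ off $\{-\beta,-\beta-\kappa\}$ and roots of $d$. Excluding $-\beta-\kappa$ is genuinely necessary: $C(e_6+e_7)=-(\beta+\kappa)(e_6+e_7)$ always holds, and this is the eigenvalue $-1.5$ in Section~\ref{sec:numerics}.

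Two small repairs are needed.
\begin{itemize}
\item The general solution is $Y_p+e^{Cu}(Y(0)-r)$, not $Y_p+e^{Cu}Y(0)$.
\item To pass from $b_0\neq0$ to $a_0\neq0$, state that $(w_1,w_3)\neq0$. This holds because $w_2=-\gamma_0w_1$, $w_4=-\gamma_0w_3$, and $w_5,w_6,w_7$ are determined by $w_1,w_3$, so $(w_1,w_3)=0$ would force $w=0$.
\end{itemize}
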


\begin{proof}
\emph{Step 1: the solution is an exponential polynomial.} Since $-\beta$ is not an eigenvalue, $Y_p=e^{-\beta u}(-\beta I-C)^{-1}s$ is a particular solution. Boundedness excludes the unstable modes, so $Y=Y_p+\sum_{k=1}^5c_ke^{\mu_ku}w_k$ with stable eigenpairs $(\mu_k,w_k)$. The five boundary conditions determine the $c_k$ uniquely by the invertibility assumption. The Gerber--Shiu functions are bounded and solve \eqref{eq:ode7}, so they are given by this formula.

\emph{Step 2: the slowest mode is an invariant solution.} Let $e^{-\gamma_0u}P$ be the term of frequency $\gamma_0$ in $m=(m_1,m_2)$; here $P$ is constant because $-\gamma_0$ is a simple eigenvalue. Insert $m$, extended by the exterior data, into the far-field equation $\G_0m=0$ on $(0,\infty)$ (Proposition~\ref{prop:exterior}). For a term $e^{-\nu u}$ on $(0,\infty)$ extended by zero, and $r\in\{\beta,\beta+\kappa\}$ with $r\neq\nu$,
\[
\int_0^ue^{-ry}e^{-\nu(u-y)}dy=\frac{e^{-\nu u}-e^{-ru}}{r-\nu}.
\]
This produces the full-line contribution at frequency $\nu$ plus a term at frequency $r$. The exterior data contribute only at frequency $\beta$. Since $\gamma_0\notin\{\beta,\beta+\kappa\}$ and exponentials with distinct exponents are linearly independent, the frequency-$\gamma_0$ component of the equation is $\G_0[e^{-\gamma_0u}P]=e^{-\gamma_0u}M(\gamma_0)P=0$ (Theorem~\ref{thm:lundberg}). Hence $P\in\ker M(\gamma_0)$. Since $M(\gamma_0)$ has rank one and $\Phi(\gamma_0)\neq\lambda_j+\delta$, the kernel is spanned by $v_0$: row $j$ of $M(\gamma_0)k=0$ gives $k_j=\lambda_j s/(\lambda_j+\delta-\Phi)$ with $s=g k_1+(p-g)k_2\neq0$. Thus $P=a_0v_0$ with $a_0\neq0$ by assumption.

\emph{Step 3: conclusion.} All other exponents are $\beta$ and $-\mu_k$ with $\operatorname{Re}(-\mu_k)>\gamma_0+\Delta$, which gives \eqref{eq:asymp}. Since both components of $v_0$ are non-zero, \eqref{eq:ratio} follows. The same expansion holds for derivatives, so $-m_j'/m_j\to\gamma_0$.
\end{proof}

\begin{remark}
Equation \eqref{eq:ratio} gives a simple, interpretable consequence of the dependence. Since $\Phi(\gamma_0)<0$ and $\lambda_1>\lambda_2$, the limiting ratio is below one: a portfolio that has just entered the low-risk regime carries asymptotically a fixed fraction of the risk of one in the high-risk regime, and the fraction is explicit.
\end{remark}

\section{Numerical illustration}\label{sec:numerics}

This section quantifies the results for a representative portfolio and isolates the effect of the dependence mechanism on the adjustment coefficient.

\subsection{Parameters}

We use the base parameters
\[
\beta=1,\quad \lambda_1=1.2,\quad \lambda_2=0.8,\quad \delta=0.05,\quad \sigma_1^2=0.2\ (A_0=0.1),\quad c=1.5 .
\]
For the exponential threshold we take $\kappa=0.5$, so $\pi=\kappa/(\beta+\kappa)=1/3$. The condition $A_0\beta^2-B_0\beta=-1.4<\lambda_2+\delta=0.85$ of Proposition~\ref{prop:roots} holds. The roots of $d$ in $\{\operatorname{Re}\gamma<\beta\}$ are $-0.747339$, $-0.073954$ and $\gamma_0=0.396621$, all simple. At $\gamma_0$ we have $T=-1.632124$ and $d'=-2.185350$. The eigenvalues of $C$ in \eqref{eq:ode7} are $-15.790$, $-15.582$, $-1.5528$, $-1.5$, $-0.39662$, $0.07395$ and $0.74734$, and the hypotheses of Theorem~\ref{thm:asymptotics} hold with $\Delta'=\beta-\gamma_0$.

\subsection{The effect of dependence}

We now isolate how the dependence mechanism affects the adjustment coefficient. Table~\ref{tab:dependence} compares three models with the same proportion $\pi$ of switches to the high-risk regime:
\begin{itemize}
\item \emph{random switching}, with constant threshold $G\equiv g_0=\pi$, where the next regime is chosen independently of the claim;
\item \emph{claim-driven switching}, with exponential threshold and $\kappa=\beta\pi/(1-\pi)$, where large claims tend to trigger the high-risk regime;
\item the \emph{independent} classical model whose claim intensity equals the long-run mean intensity $\bar\lambda=\big(\pi/\lambda_1+(1-\pi)/\lambda_2\big)^{-1}$ of the dependent model.
\end{itemize}
Figure~\ref{fig:main}(a) shows the same comparison as a function of $\pi$.

\begin{table}[ht]
\centering
\caption{Adjustment coefficient $\gamma_0$ under random and claim-driven switching, compared with the independent model having the same mean claim intensity $\bar\lambda$.}\label{tab:dependence}
\renewcommand{\arraystretch}{1.2}
\begin{tabular}{@{}lrrrrr@{}}
\toprule
threshold & parameter & $\pi$ & $\gamma_0$ & $\bar\lambda$ & $\gamma_0$, independent\\
\midrule
constant & $g_0=0.25$ & 0.250 & 0.437619 & 0.8727 & 0.443546\\
constant & $g_0=0.50$ & 0.500 & 0.386859 & 0.9600 & 0.395100\\
constant & $g_0=0.75$ & 0.750 & 0.332649 & 1.0667 & 0.338958\\
exponential & $\kappa=0.1$ & 0.091 & 0.456037 & 0.8250 & 0.470851\\
exponential & $\kappa=0.5$ & 0.333 & 0.396621 & 0.9000 & 0.428191\\
exponential & $\kappa=1$ & 0.500 & 0.363247 & 0.9600 & 0.395100\\
exponential & $\kappa=2$ & 0.667 & 0.332689 & 1.0286 & 0.358579\\
exponential & $\kappa=5$ & 0.833 & 0.303514 & 1.1077 & 0.318420\\
\bottomrule
\end{tabular}
\\[4pt]
\small Limits: $\gamma_0=0.485359$ ($\pi=0$, classical with $\lambda_2$) and $\gamma_0=0.274719$ ($\pi=1$, classical with $\lambda_1$).
\end{table}

Two effects are visible. First, both dependent models have a smaller adjustment coefficient than the independent model with the same mean claim intensity, so replacing the dependent model by its ``average'' independent counterpart underestimates the risk. Second, the reduction is much larger when switching is driven by claim size. At $\pi=1/2$ the reduction is $2.1\%$ under random switching ($0.3869$ versus $0.3951$) but $8.1\%$ under claim-driven switching ($0.3632$ versus $0.3951$). The reason is that, under claim-driven switching, large claims are followed by a higher claim intensity, so large losses tend to cluster. This clustering is invisible both in the mean intensity and in models with random regime changes.

\begin{figure}[ht]
\centering
\includegraphics[width=\textwidth]{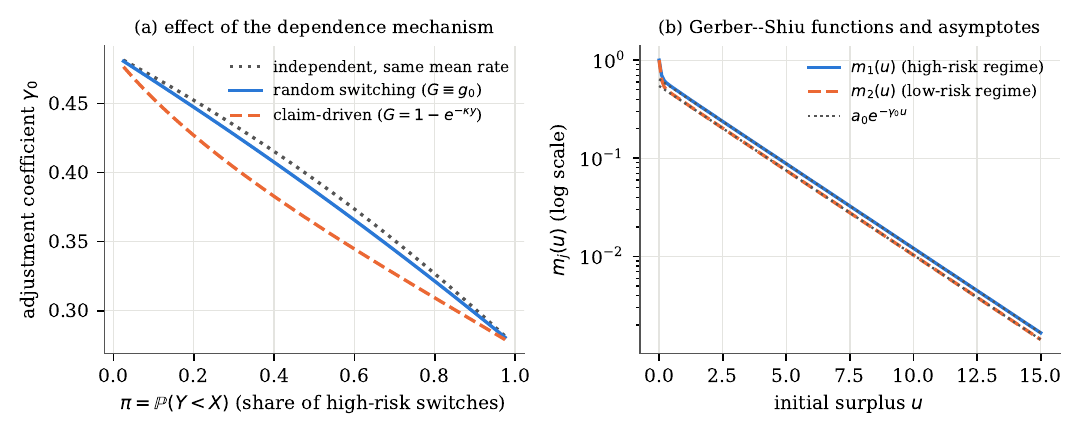}
\caption{(a) Adjustment coefficient as a function of the share $\pi$ of high-risk switches, for random switching (solid), claim-driven switching (dashed) and the independent model with the same mean intensity (dotted). (b) Gerber--Shiu functions $m_1,m_2$ for $w\equiv1$ (Laplace transform of the time of ruin), with exponential threshold $\kappa=0.5$, on a logarithmic scale, together with the asymptotes $a_0v_{0,j}e^{-\gamma_0u}$ of Theorem~\ref{thm:asymptotics} (dotted).}\label{fig:main}
\end{figure}

\subsection{Sensitivities}

Next we report the sensitivities of Proposition~\ref{prop:sens} for the base case with exponential threshold $\kappa=0.5$:
\begin{gather*}
\frac{\pd\gamma_0}{\pd\lambda_1}=-0.169,\qquad \frac{\pd\gamma_0}{\pd\lambda_2}=-0.334,\qquad \frac{\pd\gamma_0}{\pd\kappa}=-0.095,\qquad \frac{\pd\gamma_0}{\pd\beta}=0.865,\\
\frac{\pd\gamma_0}{\pd c}=0.296,\qquad \frac{\pd\gamma_0}{\pd A_0}=-0.117,\qquad \frac{\pd\gamma_0}{\pd\delta}=0.747 .
\end{gather*}
The values for $c$, $A_0$ and $\delta$ agree with the closed forms $\gamma_0T/d'$, $-\gamma_0^2T/d'$ and $T/d'$. The adjustment coefficient is twice as sensitive to the low-risk intensity $\lambda_2$ as to the high-risk intensity $\lambda_1$, because two thirds of the inter-claim intervals are spent in the low-risk regime when $\pi=1/3$. Increasing $\kappa$ shifts switches toward the high-risk regime and lowers $\gamma_0$.

\subsection{Gerber--Shiu functions}

Finally, we illustrate Theorem~\ref{thm:asymptotics}. Solving \eqref{eq:ode7} exactly for $w\equiv1$ gives the Laplace transform of the time of ruin in each regime. Table~\ref{tab:gs} lists selected values; Figure~\ref{fig:main}(b) shows the functions with their asymptotes, where $a_0=0.63920$. The ratio $m_2/m_1$ converges rapidly to the limit $0.853251$ predicted by \eqref{eq:ratio}.

\begin{table}[ht]
\centering
\caption{Conditional Gerber--Shiu functions ($w\equiv1$, $\delta=0.05$, $\kappa=0.5$) and their ratio.}\label{tab:gs}
\renewcommand{\arraystretch}{1.2}
\begin{tabular}{@{}rrrr@{}}
\toprule
$u$ & $m_1(u)$ & $m_2(u)$ & $m_2(u)/m_1(u)$\\
\midrule
0 & 1.000000 & 1.000000 & 1.000000\\
1 & 0.434184 & 0.370059 & 0.852310\\
2 & 0.290062 & 0.247409 & 0.852953\\
5 & 0.087989 & 0.075076 & 0.853242\\
10 & 0.012110 & 0.010333 & 0.853251\\
\midrule
$\infty$ & & & 0.853251 (eq.~\eqref{eq:ratio})\\
\bottomrule
\end{tabular}
\end{table}

\section{Discussion and conclusion}\label{sec:conclusion}

We conclude by summarizing what the symmetry analysis reveals about the model and what remains open.

We have carried out a Lie point symmetry analysis of the coupled integro-differential system for the conditional Gerber--Shiu functions of a dependent jump-diffusion risk model. Two features were essential. First, the system is the restriction of a far-field operator with exterior data, which separates the trivial breaking of symmetry by the boundary from the structure of the bulk dynamics. Second, the nonlocal convolution term contributes its own determining equation, and the $m$-coefficient of the local part cannot be omitted: without these, the remaining equations admit spurious candidates (Remark~\ref{rem:partial}).

The resulting group classification (Theorem~\ref{thm:classification}) is sharp. A non-trivial symmetry beyond scaling and superposition exists if and only if the volatility and the premium are constant, and it is then the translation in the surplus variable. Surplus-dependent volatility or premium growth, as caused for instance by investment of the surplus, destroys all non-trivial point symmetries (Corollary~\ref{cor:breaking}). The dependence between claim sizes and inter-claim times rigidifies the algebra: it forces equal scalings in the two regimes and equal multipliers, and it rules out symmetries with non-constant $\xi$.

In the symmetric case, the abelian algebra $\langle\pd_u,Y_0\rangle$ has the one-parameter family of subalgebras $\langle\pd_u-\gamma Y_0\rangle$, whose invariant solutions are exponentials. They exist exactly when the generalized coupled Lundberg equation holds. The equation is valid for every threshold distribution, depends on volatility, premium and discount only through $\Phi-\delta$, and reduces to the classical Lundberg equation in the limits of no switching and of independence. The smallest positive root, the adjustment coefficient, exists under a simple condition. It has explicit sensitivities and governs the exponential decay of the Gerber--Shiu functions, whose regime ratio converges to an explicit limit. Numerically, claim-driven dependence reduces the adjustment coefficient substantially below that of the independent model with the same mean claim intensity, an effect invisible to models based on average intensities or random regime changes.

Several extensions are natural. The first is the symmetry structure of the independent case (Remark~\ref{rem:indep}) and of models with more than two regimes. The second is the analysis of generators that are not fiber-preserving or not linear. The third is the regime where the coefficients depend weakly on the surplus, in which the translation symmetry is broken but its consequences for the decay rate may persist approximately; approximate symmetry methods are tool there. 
\section*{Declarations}

\noindent\textbf{Data availability.} No data were used in this study. 

\smallskip
\noindent\textbf{Conflict of interest.} The authors declare that they have no conflict of interest.


\begin{thebibliography}{99}

\bibitem{AlbrecherBoxma2004} H.~Albrecher, O.~J.~Boxma, A ruin model with dependence between claim sizes and claim intervals, Insurance Math.\ Econom.\ \textbf{35} (2004), 245--254.

\bibitem{AlbrecherTeugels2006} H.~Albrecher, J.~L.~Teugels, Exponential behavior in the presence of dependence in risk theory, J.\ Appl.\ Probab.\ \textbf{43} (2006), 257--273.

\bibitem{AsmussenAlbrecher2010} S.~Asmussen, H.~Albrecher, Ruin Probabilities, 2nd ed., World Scientific, Singapore, 2010.

\bibitem{AzadEtAl2010} H.~Azad, M.~T.~Mustafa, F.~M.~Arif, Analytic solutions of initial-boundary-value problems of transient conduction using symmetries, Appl.\ Math.\ Comput.\ \textbf{215} (2010), 4132--4140.

\bibitem{BadescuEtAl2009} A.~L.~Badescu, E.~C.~K.~Cheung, D.~Landriault, Dependent risk models with bivariate phase-type distributions, J.\ Appl.\ Probab.\ \textbf{46} (2009), 113--131.

\bibitem{BlumanKumei1989} G.~W.~Bluman, S.~Kumei, Symmetries and Differential Equations, Springer, New York, 1989.

\bibitem{BoudreaultEtAl2006} M.~Boudreault, H.~Cossette, D.~Landriault, E.~Marceau, On a risk model with dependence between interclaim arrivals and claim sizes, Scand.\ Actuar.\ J.\ \textbf{2006} (2006), 265--285.

\bibitem{CheungEtAl2010} E.~C.~K.~Cheung, D.~Landriault, G.~E.~Willmot, J.-K.~Woo, Structural properties of Gerber--Shiu functions in dependent Sparre Andersen models, Insurance Math.\ Econom.\ \textbf{46} (2010), 117--126.

\bibitem{Cramer1930} H.~Cram\'er, On the Mathematical Theory of Risk, Skandia Jubilee Volume, Stockholm, 1930.

\bibitem{DufresneGerber1991} F.~Dufresne, H.~U.~Gerber, Risk theory for the compound Poisson process that is perturbed by diffusion, Insurance Math.\ Econom.\ \textbf{10} (1991), 51--59.

\bibitem{GaetaQuintero1999} G.~Gaeta, N.~R.~Quintero, Lie-point symmetries and stochastic differential equations, J.\ Phys.\ A: Math.\ Gen.\ \textbf{32} (1999), 8485--8505.

\bibitem{GaetaSpadaro2017} G.~Gaeta, F.~Spadaro, Random Lie-point symmetries of stochastic differential equations, J.\ Math.\ Phys.\ \textbf{58} (2017), 053503.

\bibitem{Gao2016} H.~Gao, Integro-differential equations for a jump-diffusion risk process with dependence between claim sizes and claim intervals, J.\ Appl.\ Math.\ Phys.\ \textbf{4} (2016), 2061--2068.

\bibitem{GerberLandry1998} H.~U.~Gerber, B.~Landry, On the discounted penalty at ruin in a jump-diffusion and the perpetual put option, Insurance Math.\ Econom.\ \textbf{22} (1998), 263--276.

\bibitem{GerberShiu1998} H.~U.~Gerber, E.~S.~W.~Shiu, On the time value of ruin, N.\ Am.\ Actuar.\ J.\ \textbf{2} (1998), 48--72.

\bibitem{GrigorievEtAl2010} Y.~N.~Grigoriev, N.~H.~Ibragimov, V.~F.~Kovalev, S.~V.~Meleshko, Symmetries of Integro-Differential Equations, Lecture Notes in Physics 806, Springer, Dordrecht, 2010.

\bibitem{Ibragimov1994} N.~H.~Ibragimov (ed.), CRC Handbook of Lie Group Analysis of Differential Equations, Vols.~1--3, CRC Press, Boca Raton, 1994--1996.

\bibitem{KumarEtAl2022} S.~Kumar, A.~Kumar, M.~Inc, H.~Alotaibi, M.~A.~Abdou, A.~Akgul, An investigation of $(2+1)$-dimensional asymmetric Nizhnik--Novikov--Veselov system: Lie symmetry reductions, invariant solutions, dynamical behaviors and conservation laws, Results Phys.\ \textbf{43} (2022), 106034.

\bibitem{Lundberg1903} F.~Lundberg, Approximerad framst\"allning af sannolikhetsfunktionen. \AA terf\"ors\"akring af kollektivrisker, Almqvist \& Wiksell, Uppsala, 1903.

\bibitem{Miller1977} W.~Miller, Jr., Symmetry and Separation of Variables, Addison-Wesley, Reading, MA, 1977.

\bibitem{Nass2019} A.~M.~Nass, Lie symmetry analysis and exact solutions of ordinary differential equation with neutral delay, Appl.\ Math.\ Comput.\ \textbf{347} (2019), 370--380.

\bibitem{Nass2022} A.~M.~Nass, Lie symmetries of autonomous scalar first-order It\^o stochastic delay differential equations, J.\ Theoret.\ Probab.\ \textbf{35} (2022), 1939--1951.

\bibitem{NassFredericks2017} A.~M.~Nass, E.~Fredericks, W-symmetries of jump-diffusion It\^o stochastic differential equations, Nonlinear Dynam.\ \textbf{90} (2017), 2869--2877.

\bibitem{Olver1993} P.~J.~Olver, Applications of Lie Groups to Differential Equations, 2nd ed., Springer, New York, 1993.

\bibitem{Ovsiannikov1982} L.~V.~Ovsiannikov, Group Analysis of Differential Equations, Academic Press, New York, 1982.

\bibitem{TsaiWillmot2002} C.~C.-L.~Tsai, G.~E.~Willmot, A generalized defective renewal equation for the surplus process perturbed by diffusion, Insurance Math.\ Econom.\ \textbf{30} (2002), 51--66.

\bibitem{Unal2003} G.~\"Unal, Symmetries of It\^o and Stratonovich dynamical systems and their conserved quantities, Nonlinear Dynam.\ \textbf{32} (2003), 417--426.

\bibitem{YuFeng2025a} J.~Yu, Y.~Feng, Time fractional $(2+1)$-dimensional Heisenberg ferromagnetic spin chain equation: its Lie symmetries, exact solutions and conservation laws, Commun.\ Theor.\ Phys.\ \textbf{77} (2025), 055002.

\bibitem{YuFeng2025b} J.~Yu, Y.~Feng, Lie symmetries, power series solutions and conservation laws of $(2+1)$-dimensional nonlinear fractional Fokas system, Quantum Stud.\ Math.\ Found.\ \textbf{12} (2025), 14.

\bibitem{ZhangYang2011} Z.~Zhang, H.~Yang, Gerber--Shiu analysis in a perturbed risk model with dependence between claim sizes and interclaim times, J.\ Comput.\ Appl.\ Math.\ \textbf{235} (2011), 1189--1204.

\end{thebibliography}
\end{document}